\documentclass{mcom-l2}

\theoremstyle{plain}
\newtheorem{thm}{Theorem}[section]
\newtheorem{prop}[thm]{Proposition}
\newtheorem{cor}[thm]{Corollary}
\newtheorem{heur}[thm]{Heuristic}
\newtheorem*{heur*}{Heuristic}
\newtheorem{lem}[thm]{Lemma}
\theoremstyle{definition}
\newtheorem{defin}[thm]{Definition}
\theoremstyle{remark}
\newtheorem{ex}[thm]{Example}
\newtheorem{rem}[thm]{Remark}

\usepackage{algorithm,algorithmic}

\algsetup{indent=2em}

\usepackage{tikz,multirow,url}
\usetikzlibrary{decorations.pathreplacing}

\newcommand{\CC}{{\mathbf C}}

\newcommand{\KK}{{\mathbf K}}
\newcommand{\NN}{{\mathbf N}}
\newcommand{\OO}{{\boldsymbol{\mathcal O}}}
\newcommand{\QQ}{{\mathbf Q}}
\newcommand{\RR}{{\mathbf R}}
\newcommand{\ZZ}{{\mathbf Z}}
\newcommand{\Bcal}{{\mathcal B}}

\newcommand{\Dcal}{{\mathcal D}}
\newcommand{\Ical}{{\mathcal I}}
\newcommand{\Lcal}{{\mathcal L}}
\newcommand{\Ncal}{{\mathcal N}}
\newcommand{\Pcal}{{\mathcal P}}
\newcommand{\Ucal}{{\mathcal U}}
\newcommand{\afrak}{{\boldsymbol{\mathfrak a}}}
\newcommand{\bfrak}{{\boldsymbol{\mathfrak b}}}
\newcommand{\cfrak}{{\boldsymbol{\mathfrak c}}}
\newcommand{\pfrak}{{\boldsymbol{\mathfrak p}}}

\newcommand{\Cl}{{\textit{\textrm{Cl}}\left(\OO_\KK\right)}}

\newcommand{\Reg}{{\textit{\textrm{Reg}}}}
\newcommand{\DD}{{|\Delta_\KK|}}

\newcommand{\ie}{{\emph{i.e.,}~}}
\newcommand{\ve}{{\varepsilon}}
\newcommand{\DDi}{{|\Delta_{\KK_i}|}}

\newcommand{\Poly}{{\operatorname{Poly}}}

\newcommand{\gen}[1]{\left\langle {#1} \right\rangle}

\newcommand{\LL}{\textit{\L}}

\title[Class group computations for large degree number fields]{On the complexity of class group computations for large degree number fields}

\author{Alexandre G\'elin}
\address{Laboratoire de Math\'ematiques de Versailles, UVSQ, CNRS, Universit\'e Paris-Saclay, Versailles, France}
\email{alexandre.gelin@uvsq.fr}

\begin{document}

\begin{abstract}
In this paper, we examine the general algorithm for class group computations, when we do not have a small defining polynomial for the number field. Based on a result of Biasse and Fieker, we simplify their algorithm, improve the complexity analysis and identify the optimal parameters to reduce the runtime. We make use of the classes~$\mathcal D$ defined in~\cite{GJ16} for classifying the fields according to the size of the extension degree and prove that they enable to describe all the number fields. 
\end{abstract}

\maketitle


\section{Introduction}

In algebraic number fields, two structures are of particular interest: the class group, finite, and the unit group, finitely generated. Their computations are main problems in algorithmic algebraic number theory. Shanks~\cite{Sha69,Sha72} first described an algorithm, the baby-step--giant-step method, in the special case of quadratic number fields. This method runs in exponential runtime $O(\DD^{\frac 1 5})$ under the General Riemann Hypothesis (GRH), where $\Delta_\KK$ denotes the absolute discriminant of the considered number field.

For imaginary quadratic number fields, Hafner and McCurley~\cite{HMC89} managed to compute the class group structure in heuristic subexponential time $L_\DD(\frac 1 2, \sqrt{2})$. 
This $L$-notation is classical when presenting index calculus algorithms with subexponential complexity. Given two constants $\alpha$ and $c$ with $\alpha\in [0,1]$ and $c \geq 0$, $L_N(\alpha,c)$ is used as a shorthand for:
\[\exp\left((c + o(1))(\log N)^\alpha (\log \log N)^{1-\alpha} \right),\]
where $o(1)$ tends to $0$ as $N$ tends to infinity. We sometimes encounter the notation $L_N(\alpha)$ when specifying $c$ is superfluous.

Buchmann~\cite{Buc90} extended this method to all number fields. However, the extension degree, arbitrary, has to be fixed to obtain the heuristic complexity $L_\DD(\frac 1 2, 1.7)$. More recently, the subexponential complexity was reached for all number fields, without restriction on the extension degree. Biasse and Fieker~\cite{BF14} got an $L_\DD(\frac{2}{3}+\varepsilon)$ complexity\footnote{For an arbitrary small $\varepsilon>0$.} in the general case and $L_\DD(\frac{1}{2})$ when the extension degree $n$ satisfies the inequality $n\leq (\log\DD)^{3/4-\varepsilon}$. 

For some restricted classes of number fields, Biasse and Fieker~\cite{BF14} achieved an even better~$L_\DD(a)$ complexity with $a$ possibly as low as $\frac{1}{3}$. More precisely, this improved complexity holds when one knows a defining polynomial with small coefficients compared to the discriminant of the field. New classes of number fields have been introduced in~\cite{GJ16} in order to widen the conditional improvement of Biasse and Fieker by looking for such a small defining polynomial. 

\subsection*{Contribution.} We first show that regarding at the classes~$\Dcal$ introduced in~\cite{GJ16} suffices to consider all number fields. This enables to give a bird's eye view of the state of the art concerning class group computations, according to the extension degree of the number field. We then focus on large degree number fields: we give a simplified version of the relation collection and, thanks to a better choice for the parameters, show that it can run in time $L_\DD \left(a \right)$ with $a \in [\frac{1}{2}, \frac{2}{3}]$ instead of $L_\DD \left(\frac{2}{3} + \varepsilon \right)$ when the extension degree is large. In addition, we refine the $L_\DD \left(\frac{1}{2} \right)$ complexity by calculating the second constant: we obtain a runtime in $L_\DD \left(\frac{1}{2}, \frac{\omega-1}{2\sqrt{\omega}} \right)$. At the very end, using another enhancement on lattice reduction, we present an improved version whose complexity grows linearly between $L_\DD \left(\frac{1}{2}\right)$ and $L_\DD \left(\frac{3}{5}\right)$ instead of $L_\DD \left(\frac{2}{3}\right)$.

\subsection*{Outline.} The article is organized as follows. In Section~\ref{sec:genst} we provide a reminder about index calculus method, applied in the context of class group computation. Then we classify number fields according to the classes~$\Dcal$ in Section~\ref{sec:class}. Finally, the algorithm is described in Section~\ref{sec:algo} while Section~\ref{sec:Comp} is devoted to the complexity analysis. The last improvement is the topic of Section~\ref{sec:cheon}.

\section{General strategy for class group computation} \label{sec:genst}

The current best algorithms for class group computation rely on the index calculus method. It is also the case for factoring integers or computing discrete logarithms in finite fields. A brief summary is as follows:

\begin{enumerate}
\item Fix a factor base composed of small elements and that is large enough to generate all elements of the group.

\item Collect relations between those small elements, corresponding to linear equations.

\item Deduce the result sought performing linear algebra on the system built from the relations.
\end{enumerate}

We give more details about the different steps in case of class group computation. Afterwards, every contribution is examined with respect to this global strategy.

\subsection*{The factor base.}
We define the factor base $\Bcal$ as the set of all prime ideals in~$\OO_\KK$ that have a norm bounded by a constant $B$. This bound must be chosen such that the factor base generates the whole class group. Bach showed in~\cite[Theorem~4.4]{Bac90} that assuming the Extended Riemann Hypothesis (ERH), the classes of ideals with a representative of norm less than $12 \left(\log \DD\right)^2$ suffice to generate the class group.
However, as the ability to find relations in the collection step increases with the size of the factor base, we fix \[B= \LL_\DD(\beta, c_b),\]
for values of $\beta$ and $c_b$ with $0 < \beta < 1$ and $c_b >0$ that are determined later. The notation $\LL$ is identical as the $L$ introduced earlier, except that we have removed the $o(1)$, in order to consider constants: $\LL_N(\alpha,c) = e^{c\left(\log N\right)^\alpha\left(\log \log N\right)^{1-\alpha}}$.

Thanks to the Landau Prime Ideal Theorem~\cite{Lan03}, we know that in every number field $\KK$, the number of prime ideals of norm bounded by $B$, denoted by~$\pi_\KK(B)$, satisfies
\begin{equation} \label{eq:Landau}
\pi_\KK(B) \sim \frac{B}{\log B}.
\end{equation}
As a consequence, the cardinality of the factor base is about $B$, namely: \[N=|\Bcal| = L_\DD(\beta, c_b).\]

\subsection*{Relation collection.}
Let $\pfrak_i$, $1 \leq i \leq N,$ denote the $N$ prime ideals in the factor base $\Bcal$. As their classes generate the class group $\Cl$, we have a surjective morphism \mbox{$\phi:\ZZ^n \longrightarrow \Cl$} via
\begin{equation} \label{eq:Morph}
\begin{tabular}{ccccc}
$\ZZ^N$ & $\longrightarrow$ & $\Ical$ & $\longrightarrow$ & $\Cl$ \\
$(e_1,\dotsc,e_N)$ & $\longmapsto$ & $\prod\limits_i \pfrak_i ^{e_i}$ & $\longmapsto$ & $\prod\limits_i [\pfrak_i] ^{e_i}$,
\end{tabular}
\end{equation}
and the class group $\Cl$ is then isomorphic to $\ZZ^N / \ker \phi$. By computing the kernel of this morphism, we deduce the class group, which is given by the lattice of the vectors $(e_1,\dotsc,e_N)$ in~$\ZZ^N$ for which $\prod \pfrak_i ^{e_i} = \gen{x}$ with $x \in \KK^*$. Thus the relations that we want to collect are given by~$x$ in $\KK^*$ such that 
\begin{equation} \label{eq:GenVal}
\gen{x} = \prod \pfrak_i ^{e_i}. 
\end{equation}

Relation collection is the main part of the algorithm, we give more details about~it in Section~\ref{sec:algo}.

\subsection*{Linear algebra.}
Once the relations are collected, we store them in a matrix. A row corresponds to an algebraic number $x$ and the $i$-th coefficient is the valuation of the principal ideal~$\gen{x}$ at $\pfrak_i$ --- that is $e_i$ in Equation~\eqref{eq:GenVal}. These valuations $e_i$ are computed by looking first at the norm of $\gen{x}$, as explained in Appendix~\ref{app:smooth}. Then, the structure of the class group is given by the \emph{Smith Normal Form} (SNF) of the matrix. More precisely, we first compute the \emph{Hermite Normal Form} (HNF) with a pre-multiplier since we need kernel vectors in the verification step (as explained below). Finally, the class number can be deduced by multiplying the diagonal coefficients of the HNF while the group structure is given by the diagonal coefficients of the SNF.

\subsection*{Verification.}
The group $H$ provided by the linear algebra step is only a candidate for the class group and has to be verified. Indeed, even assuming that the factor base is large enough to generate the full class group, the number of relations derived may be insufficient. In that case, the class group $\Cl$ is only a quotient of the candidate $H$. Fortunately we can obtain some information on the class number from the Class Number Formula:

\begin{prop}[{\cite[Theorem~4.9.12]{Coh93}}]
Let $\KK$ be a number field of degree~$n$ with $n=r_1+2r_2$ where $r_1$ denotes the number of real embeddings and $r_2$ the number of pairs of complex embeddings. Let $h_\KK$, $\Reg_\KK$, $\Delta_\KK$, $w_\KK$ and $\zeta_\KK(s)$ denote respectively the class number, the regulator, the discriminant, the number of roots of unity and the Dedekind zeta function of $\KK$. Then the function $\zeta_\KK(s)$ converges absolutely for~$s$ with $\Re(s) > 1$ and extends to a meromorphic function defined for all complex~$s$ with only one simple pole at $s = 1$, whose residue satisfies
\[ \lim_{s \to 1} (s-1)\zeta_\KK(s) = \frac{2^{r_1} \cdot (2\pi)^{r_2} \cdot h_\KK \cdot \Reg_\KK}{w_\KK \cdot \sqrt{\DD}}. \]
\end{prop}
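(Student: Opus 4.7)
The plan is to follow the classical Dirichlet--Dedekind strategy, which splits the zeta function according to ideal classes and then reduces a count of ideals to a count of lattice points in a cone of bounded norm. First I would write
\[ \zeta_\KK(s) = \sum_{\afrak \subseteq \OO_\KK} \frac{1}{N(\afrak)^s} = \sum_{C \in \Cl} \zeta_C(s), \qquad \zeta_C(s) = \sum_{\substack{\afrak \in C \\ \afrak \text{ integral}}} \frac{1}{N(\afrak)^s}, \]
and establish absolute convergence for $\Re(s) > 1$ by comparison with the Dedekind zeta of $\QQ$ (using that every rational prime factors into at most $n$ primes of $\OO_\KK$). The residue at $s=1$ will be shown to equal the same quantity $\rho$ for every class $C$, so the residue of $\zeta_\KK$ is $h_\KK \cdot \rho$, and I only have to identify $\rho$.

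For a fixed class $C$, choose an integral ideal $\bfrak$ in the inverse class $C^{-1}$. Then $\afrak \mapsto \afrak \cdot \bfrak$ sets up a bijection between integral ideals in $C$ and principal integral ideals of the form $(\alpha)$ with $\alpha \in \bfrak$, taken modulo the action of $\OO_\KK^\times$. This turns $\zeta_C(s)$ into
\[ \zeta_C(s) = N(\bfrak)^s \sum_{\substack{\alpha \in (\bfrak \setminus \{0\})/\OO_\KK^\times}} \frac{1}{|N(\alpha)|^s}, \]
reducing the problem to counting, up to units, nonzero elements of the lattice $\bfrak \subset \KK \otimes_\QQ \RR \cong \RR^{r_1} \times \CC^{r_2}$ whose absolute norm is at most $t$, with $t \to \infty$.

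The next step, which is the technical heart, is to build a fundamental domain $\Fcal$ for the action of $\OO_\KK^\times$ on the open subset $\{x : N(x) \neq 0\}$. After quotienting out by the torsion $\mu_\KK$ of order $w_\KK$, I would use the logarithmic embedding $\Log : (\KK \otimes \RR)^\times \to \RR^{r_1+r_2}$ and Dirichlet's unit theorem: the image of $\OO_\KK^\times / \mu_\KK$ is a lattice of covolume $\sqrt{r_1+r_2} \cdot \Reg_\KK$ in the trace-zero hyperplane. A fundamental domain for the free part of the unit action can therefore be taken to have the shape (a fundamental parallelepiped in the hyperplane) $\times$ (the norm axis), and the torsion introduces a factor $1/w_\KK$. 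The key computation is then the volume of $\Fcal \cap \{|N| \le t\}$; by changing variables from $(x_1,\dots,x_{r_1},z_1,\dots,z_{r_2})$ to $(\Log, \text{arg}, |N|)$, this volume works out to
\[ \operatorname{vol}(\Fcal \cap \{|N| \le t\}) = \frac{2^{r_1} (2\pi)^{r_2} \Reg_\KK}{w_\KK} \cdot t, \]
the regulator arising exactly as the covolume of the unit lattice. The hardest part of the whole argument is this volume computation together with checking that $\Fcal$ is genuinely a fundamental domain (no overlaps, no missing orbits).

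To finish, I would count lattice points of $\bfrak$ inside the cone $\Fcal \cap \{|N| \le t\}$. Since $\bfrak$ has covolume $N(\bfrak) \sqrt{\DD}/2^{r_2}$ in Minkowski space, a standard Lipschitz-boundary estimate gives
\[ \#\{\alpha \in (\bfrak/\OO_\KK^\times) : |N\alpha| \le t\} = \frac{2^{r_1}(2\pi)^{r_2}\Reg_\KK}{w_\KK \cdot N(\bfrak)\sqrt{\DD}} \cdot t + O(t^{1 - 1/n}). \]
Dividing by $N(\bfrak)^s$ and applying either Abel summation or a Tauberian theorem (e.g.\ Wiener--Ikehara) to a Dirichlet series with such a main term yields a simple pole of $\zeta_C(s)$ at $s=1$ with residue $\rho = 2^{r_1}(2\pi)^{r_2}\Reg_\KK / (w_\KK \sqrt{\DD})$, independently of $C$. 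Summing over the $h_\KK$ classes gives the claimed residue. The meromorphic continuation past $s=1$ (or at least into a punctured neighbourhood) follows by subtracting off the polar term and using the same lattice-point error estimate, which gives a Dirichlet series converging for $\Re(s) > 1 - 1/n$; the full continuation to $\CC$ is standard via the functional equation, but is not needed for the residue statement.
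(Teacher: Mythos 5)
The paper does not prove this proposition at all: it is quoted from \cite[Theorem~4.9.12]{Coh93} as a classical fact, so there is no internal argument to compare yours against. Your sketch is the standard Dirichlet--Dedekind proof of the analytic class number formula --- split $\zeta_\KK$ over ideal classes, turn the ideals of a class $C$ into $\OO_\KK^\times$-orbits of lattice points of an ideal $\bfrak\in C^{-1}$, count those points in a cone cut out by a fundamental domain built from the logarithmic embedding and Dirichlet's unit theorem, and extract the residue by Abel summation --- and it is in substance the proof behind Cohen's statement, so the approach is sound and complete at the level of a sketch.

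Two points deserve tightening. First, your intermediate constants mix two normalizations of Minkowski space: with the identification $\CC\cong\RR^2$ under which the covolume of $\sigma(\bfrak)$ is $2^{-r_2}\Ncal(\bfrak)\sqrt{\DD}$ (as you state), the volume of the truncated fundamental domain is $2^{r_1}\pi^{r_2}\Reg_\KK\, t/w_\KK$, not $2^{r_1}(2\pi)^{r_2}\Reg_\KK\, t/w_\KK$; the latter value belongs to the normalization in which the covolume is $\Ncal(\bfrak)\sqrt{\DD}$, i.e.\ the one adopted in Lemma~\ref{lem:DetLat} of the paper. The two factors of $2^{r_2}$ cancel, and your final point count and residue are the correct classical values, but in a statement whose entire content is an exact constant you should fix one convention and carry it through both the volume and the covolume. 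Second, the proposition asserts meromorphic continuation to all of $\CC$ with a single simple pole; the lattice-point estimate only yields continuation to $\Re(s)>1-\frac{1}{n}$, and the full continuation (via Hecke's functional equation) is invoked rather than proved --- this is acceptable since only the residue at $s=1$ is used in the paper, but it should be flagged as a citation, on the same footing as the paper's own reference to Cohen.
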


We recall that this residue can also be expressed as the Euler Product:
\begin{equation} \label{PE}
\prod _p \frac {1-\frac 1 p } {\prod\limits_{\pfrak \mid p} \big(1-\frac 1 {\mathcal N(\pfrak)} \big)}~,
\end{equation}
product being taken over all prime numbers $p$. An approximation of this product may be computed at the very beginning of the algorithm, along with the generation of the factor base. Indeed Bach proves in~\cite{Bac95} that a good enough approximation is obtained in polynomial time considering only the primes of norm below $O\left((\log \DD)^2\right)$.

Thus we need at least an approximation of the regulator of the number field in order to carry out this verification. Fortunately, it does not cost too much to determine a candidate for the regulator once we have our candidate for the class group. Indeed, the collected relations make it possible to infer one: by looking for elements in the kernel of the relation matrix, we are computing units of $\KK$. Then once we have found generators of the group spanned by these units, it only remains to compute a determinant. If these generators form a set of fundamental units, we get the regulator. Otherwise, we have only found a multiple of the regulator, because the group spanned by those is a subgroup of the unit group $\Ucal(\KK)$.

In the end, when we have the --- hypothetical --- class number and regulator, it is enough to compare their product with the approximation calculated from the Euler Product. Either the ratio is close to 1 in which case the two quantities are the correct ones, or it is not and more relations are required. This verification step works since both class number and regulator are computed decreasingly: if there is a sufficient number of primes ideals --- respectively units --- involved, then adding a relation can only reduce the class number --- respectively the regulator~--- by an integer factor. As a consequence, the ratio is close to 1 only for $h_\KK$ and $\Reg_\KK$.

\section{The classification defined by classes $\Dcal$ is sufficient} \label{sec:class}

For the discrete logarithm problem in finite fields, all the fields are classified according to the relative size of their characteristic --- small, medium or large. Our purpose is to derive a similar classification for the number fields. For finite fields, the cardinality $Q$ is completely determined by the characteristic $p$ and the extension degree $n$, according to the equation $Q=p^n$. For number fields, the extension degree remains, but the characteristic is replaced by the size of the defining polynomial, represented by its height $H(T)$. Unfortunately, number fields do not provide any equality similar to $Q=p^n$ for finite fields, but only the inequality of~\cite[Proposition~2.1]{GJ16}:
\begin{equation} \label{eq:DiscH} \DD \leq n^{2n} H(T)^{2n-2}.\end{equation}

Therefore, we choose the extension degree as the main parameter of our classification. The Minkowski's bound~\cite[Corollary~2.9]{PZ89} induces that $n = O(\log \DD)$, because every non-zero integral ideal has a norm in $\NN^*$. Thus we want to express~$n$ in terms of $\log \DD$. Fortunately, this choice is a perfect match with the classes $\Dcal$ introduced in~\cite{GJ16}.

\begin{defin}[{\cite[Section~3]{GJ16}}]
Let $n_0>1$ be a real parameter arbitrarily close to $1$, $d_0 > 0$, \mbox{$\alpha\in [0,1]$} and $\gamma \geq 1 - \alpha$. The class $\Dcal_{n_0,d_0,\alpha,\gamma}$ is defined as the set of all number fields $\KK$ of discriminant $\Delta_\KK$ that admit a monic defining polynomial $T \in \ZZ[X]$ of degree $n$ that satisfies:
\begin{align} \label{eq:ClassD}
\frac{1}{n_0} \left(\frac{\log \DD }{\log\log \DD}\right)^\alpha \quad \leq \quad &n \quad \leq \quad n_0 \left(\frac{\log \DD }{\log\log \DD}\right)^\alpha \qquad \text{and} \nonumber\\
d = \log H(T) \quad &\leq \quad d_0 (\log \DD)^\gamma (\log\log \DD)^{1-\gamma}.
\end{align}
\end{defin}

We recall that the factor $\log \log \DD$ has been introduced to simplify the complexity analysis, while the condition $\gamma \geq 1 - \alpha$ is a direct consequence of Equation~\eqref{eq:DiscH}.
We emphasize that the extension degree carries more information than the size of the coefficients of a defining polynomial --- while giving the extension degree or the characteristic of a finite field carries the same information. Indeed, there exists an infinity of defining polynomials, and the quality of the smallest one depends on the number field: it is not known that we can always find one satisfying the lower bound $\gamma = 1 - \alpha$. That is why classifying number fields by their extension degree $n$ --- that is by $\alpha \in [0,1]$ --- makes more sense. Then, for each $\alpha$, there exists additional disparities according to $\gamma$, which is always greater than $1- \alpha$.

Here is the main theorem obtained in~\cite{GJ16}:
\begin{thm} \label{thm:ANTS}
Under ERH and smoothness heuristics, for every number field $\KK$ that belongs to $\Dcal_{n_0,d_0,\alpha,\gamma}$, there exists an $L_\DD(a,c)$ algorithm for class group and unit group computation for some $c > 0$ and $a=\max \left(\alpha,\frac \gamma 2 \right)$.
\end{thm}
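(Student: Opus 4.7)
The plan is to instantiate the general index calculus framework of Section~\ref{sec:genst} using the defining polynomial $T$ of $\KK$ provided by the membership $\KK \in \Dcal_{n_0,d_0,\alpha,\gamma}$, and to optimize the exponents of the factor base bound and of the sampling space so as to reach the announced complexity. I assume ERH and the smoothness heuristic of Section~\ref{sec:genst} throughout.

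I first fix $B = \LL_\DD(\beta, c_b)$, which produces $|\Bcal| = L_\DD(\beta, c_b)$ by~\eqref{eq:Landau} and generates $\Cl$ under ERH. For the relation collection, I let $\theta$ be a root of $T$ and sample polynomials $P \in \ZZ[X]$ of degree less than~$m$ and height at most $B_P = \LL_\DD(\beta_P, c_P)$. Since $N_{\KK/\QQ}(P(\theta)) = \pm\Res(T,P)$, a standard resultant estimate gives
\[
\log\bigl|N_{\KK/\QQ}(P(\theta))\bigr| \;\leq\; n\log B_P + m\log H(T) + O(\log n).
\]
Substituting $n \asymp (\log\DD/\log\log\DD)^\alpha$ and $\log H(T) \leq d_0(\log\DD)^\gamma(\log\log\DD)^{1-\gamma}$ and keeping $m$ polylogarithmic in $\log\DD$, the leading term of the right-hand side is of order $(\log\DD)^{a}(\log\log\DD)^{1-a}$ with $a = \max(\alpha+\beta_P,\gamma)$. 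For each candidate I test $B$-smoothness of this norm via Appendix~\ref{app:smooth}, and each success yields one row of the relation matrix via~\eqref{eq:GenVal}.

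By Canfield--Erd\H{o}s--Pomerance the $B$-smoothness probability equals $u^{-u(1+o(1))}$ with $u = \log M/\log B$, so collecting $|\Bcal|$ relations costs $|\Bcal|\cdot u^u = L_\DD(\max(\beta,a-\beta),\cdot)$ up to polynomial overheads, while the Hermite/Smith normal forms and the Euler-product verification of Section~\ref{sec:genst} cost $L_\DD(\beta,\cdot)$. Choosing $\beta = a/2$ minimizes $\max(\beta, a-\beta)$ and yields a total exponent of $a/2$. I then minimize $a$ over the sampling parameters subject to the constraint $B_P^m \geq u^u\cdot|\Bcal|$, which with $m$ polylogarithmic reads $\beta_P \geq a/2$. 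One verifies that when $\alpha \geq \gamma/2$ the choice $\beta_P = \alpha$ gives $a = 2\alpha$, and when $\alpha < \gamma/2$ the choice $\beta_P = \gamma/2$ gives $a = \gamma$; in both regimes $a/2 = \max(\alpha,\gamma/2)$, producing an $L_\DD(a,c)$ algorithm with an explicit $c$ depending only on $n_0$ and $d_0$. The unit group is recovered from kernel vectors computed during verification, within the same budget.

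The main obstacle is checking that these parameter choices remain feasible in each regime, namely that $\beta\in(0,1)$, that $m$ is a nonnegative integer, and that $u\to\infty$ so that Canfield--Erd\H{o}s--Pomerance applies, together with justifying the smoothness heuristic for the non-uniform distribution of resultants $\Res(T,P)$ produced by the sampling; this regime-by-regime verification is the technical heart of~\cite{GJ16}.
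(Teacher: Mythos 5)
The paper never proves Theorem~\ref{thm:ANTS}: it is quoted as the main result of \cite{GJ16}, so there is no internal proof to compare against. Your sketch essentially reconstructs the route of that reference (and of the conditional algorithm of \cite{BF14} on which it relies): relations from $B$-smooth values of $N_{\KK/\QQ}(P(\theta))=\pm\Res(T,P)$ for low-degree, small-height $P$, the resultant bound giving a norm of size $L_\DD\left(\max(\alpha+\beta_P,\gamma)\right)$, heuristic smoothness probabilities, and a balance of the collection cost against the HNF/SNF linear algebra; your parameter choices do land on $a=\max\left(\alpha,\frac{\gamma}{2}\right)$, which is what the statement claims. Two caveats you should fold into the ``regime-by-regime verification'': the prescription that $m$ be polylogarithmic must be capped at $m\leq n-1$, which matters only when $\alpha$ is close to $0$ (there a constant $m$ yields the same exponent, since the term $m\log H(T)$ keeps exponent $\gamma$); and smoothness alone is not enough --- you also need a generation/full-rank heuristic for the relation lattice (in the spirit of Heuristic~\ref{heur:NbRel+}) together with the Euler-product/class-number-formula verification to certify both the class group and the regulator, exactly as assumed in \cite{GJ16}. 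As a side remark, your constraint analysis is not tight: allowing the degree of the sampled polynomials to grow like $(\log\DD/\log\log\DD)^{\mu}$ lets one push the exponent down to $\frac{\alpha+\gamma}{3}$ in part of the range (this is the refinement behind the ``as low as $\frac{1}{3}$'' claim of \cite{BF14}), but proving the stated bound $\max\left(\alpha,\frac{\gamma}{2}\right)$ does not require it.
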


Thanks to the algorithm described in~\cite{GJ16}, we can restrict our study to the classes~$\Dcal$ with $\gamma \leq 1$ when~$\alpha$ is in~$\left[0,\frac{1}{2}\right]$. In these cases, Theorem~\ref{thm:ANTS} shows that we can compute the class group in time below $L_\DD\left(\frac{1}{2}\right)$. When $\alpha \geq \frac{1}{2}$, it is too costly to look for a small polynomial. We focus in this article on \emph{large degree} number fields, the ones where $\alpha \geq \frac{1}{2}$. 

At this point, it still remains to prove that considering classes $\Dcal$ with $\alpha \in [0,1]$ suffices. At first sight, the Minkowski theorem only results in $n = O(\log \DD)$ and implies that every number field belongs to a class $\Dcal$ with $\alpha \leq 1 + \ve$ for an arbitrarily small $\ve > 0$. However, a more accurate analysis leads to the following result:

\begin{prop} \label{prop:NonEx}
Given $n_0 > 1$ and $\alpha > 1$, there does not exist an infinite family~$(\KK_i)_{i \geq 1}$ of number fields with discriminants $\DDi$ and degrees $n_i$ that satisfy
\[\frac{1}{n_0} \left(\frac{\log \DDi }{\log\log \DDi}\right)^\alpha \quad \leq \quad n_i \quad \leq \quad n_0 \left(\frac{\log \DDi }{\log\log \DDi}\right)^\alpha.\]
\end{prop}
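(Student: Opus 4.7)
The plan is to combine Minkowski's discriminant bound with Hermite's finiteness theorem to derive a contradiction. The paper already notes that Minkowski gives $n = O(\log \DD)$, so once that is pinned down precisely, one just has to observe that the hypothesized lower bound on $n_i$ grows strictly faster than $\log \DDi$ when $\alpha > 1$.

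\smallskip

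\noindent\textbf{Step 1: a concrete Minkowski estimate.} I would first recall the Minkowski lower bound on the discriminant: every number field $\KK$ of degree $n = r_1 + 2r_2$ satisfies
\[ \sqrt{\DD} \;\geq\; \left(\frac{\pi}{4}\right)^{r_2} \frac{n^n}{n!}. \]
Taking logarithms, using Stirling's formula $\log n! = n\log n - n + O(\log n)$, and bounding $r_2 \leq n/2$, one obtains an absolute constant $C > 0$ such that
\[ n \;\leq\; C \log \DD \]
for every number field $\KK$ with $\DD$ large enough (any $C > (2 - \log(4/\pi))^{-1}$ works).

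\smallskip

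\noindent\textbf{Step 2: reduction to the case $\DDi \to \infty$.} Suppose for contradiction that an infinite family $(\KK_i)_{i \geq 1}$ as in the statement exists. By Hermite's theorem only finitely many number fields have a given discriminant, so the set $\{\DDi : i \geq 1\}$ must be unbounded. After extracting a subsequence I may assume $\DDi \to \infty$ as $i \to \infty$.

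\smallskip

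\noindent\textbf{Step 3: contradiction.} Combining the hypothesis with the Minkowski estimate from Step~1, one gets
\[ \frac{1}{n_0} \left(\frac{\log \DDi}{\log\log \DDi}\right)^\alpha \;\leq\; n_i \;\leq\; C \log \DDi \]
for all sufficiently large $i$. Dividing by $\log \DDi$ yields
\[ \frac{1}{n_0} \cdot \frac{(\log \DDi)^{\alpha-1}}{(\log\log \DDi)^\alpha} \;\leq\; C. \]
Since $\alpha > 1$, the left-hand side tends to infinity as $\DDi \to \infty$, which contradicts the constant upper bound $C$.

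\smallskip

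The only mildly delicate point is Step~2, where Hermite's theorem is needed to ensure that an infinite family forces unbounded discriminants; everything else is a direct comparison of growth rates. No new heuristic or deep input is required beyond Minkowski and Hermite.
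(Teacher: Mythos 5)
Your proof is correct and takes essentially the same route as the paper's: Minkowski's bound sharpened via Stirling to get $n=O(\log \DD)$, finiteness of number fields of bounded discriminant to force $\DDi \to \infty$, and the growth comparison for $\alpha>1$ to produce the contradiction, using only the lower bound on $n_i$. The only difference is presentational --- you package the estimate as $n_i \leq C\log\DDi$ and divide, whereas the paper keeps the explicit constant $2+\log\frac{\pi}{4}$ and shows the resulting right-hand side tends to~$0$.
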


\begin{proof}
We proceed by contradiction. Let $(\KK_i)_{i \geq 1}$ be an infinite family of number fields whose degrees $n_i$ satisfy
\[\frac{1}{n_0} \left(\frac{\log \DDi }{\log\log \DDi}\right)^\alpha \leq n_i.\]
We provide an upper bound in the statement of the proposition as it is in the definition of classes $\Dcal$. However, we only consider this inequality because it is the one that is problematic. The Minkowski's bound~\cite[Corollary~2.9]{PZ89} states that for a field $\KK$ of degree $n$,
\begin{equation} \label{eq:Mink}
\frac{n^n}{n!} \cdot \left(\frac{\pi}{4}\right)^{\frac{n}{2}} \leq \sqrt{\DD}.
\end{equation}

Combining Equation~\eqref{eq:Mink} with the inequality $n! \leq e\,n^{n+\frac12}\,e^{-n}$ derived from the Stirling formula~\cite{Moi30,Sti30}, we obtain 
$n \left(2+\log \frac{\pi}{4} \right) \leq \log \DD + 2 + \log n$. Let~$A$ denote the constant $2+\log \frac{\pi}{4} > 1$. Then for all $i \geq 1$, we have
\begin{multline*}
\frac{A}{n_0} \left(\frac{\log \DDi}{\log\log \DDi}\right)^\alpha \leq \log \DDi + 2 + \log n_0 + \alpha \left(\log\log \DDi-\log\log\log \DDi\right) \\
\Longrightarrow \quad 0 < \frac{A}{n_0} \leq \frac{\left(\log\log \DDi\right)^\alpha}{\left(\log \DDi\right)^{\alpha-1}} + \left(2 + \log n_0 + \alpha \log\log \DDi\right) \cdot \left(\frac{\log\log \DDi}{\log \DDi}\right)^\alpha.
\end{multline*}

Finally, as the set of number fields having bounded discriminant is finite, it follows from our initial assumption that the family of discriminants $\left(\DDi\right)_{i \geq 1}$ tends to infinity. But in that case, as $\alpha$ is chosen strictly greater than 1, the right hand side tends to 0, which leads to a contradiction.
\end{proof}

\begin{ex}
To illustrate this proposition, we consider cyclotomic fields, which are known to be fields with small discriminants and large degrees.

For the $l$-th cyclotomic field $\KK = \QQ(\zeta_l)$, with $l = \prod p_i^{k_i}$ and denoting by $\varphi$ the Euler totient function, the extension degree satisfies
\[\left[\QQ(\zeta_l) : \QQ\right] = \varphi(l) = \prod \varphi\left(p_i^{k_i}\right) = \prod (p_i-1)p_i^{k_i-1},\]
and the discriminant is (see~\cite[Proposition~2.7]{Was97})
\[\DD = \frac{l^{^{\varphi(l)}}}{\prod p_i^{{\varphi(l)}/{p_i-1}}}.\]

Thus we obtain
\begin{equation} \label{eq:phi}
\varphi(l) = \frac{\log\DD}{\log \log \DD} \cdot \frac{\sum (k_i-1) \log p_i + \log (p_i-1)}{\sum (k_i-\frac{1}{p_i-1})\log p_i} \big(1+o(1)\big),  
\end{equation}
and as $(k_i-1) \log p_i + \log (p_i-1) \approx (k_i-\frac{1}{p_i-1})\log p_i$ when $p_i$ or $k_i$ tends to infinity, we conclude that the ratio of the sums tends to 1 when $l$ tends to infinity.

For instance, when $l=p$, the second factor in Equation~\eqref{eq:phi} is $\frac{p-1}{p-2} \frac{\log (p-2)+ \log \log p}{\log p}$, which tends to 1 as $p$ goes to infinity, while for $l=p^k$ with $p$ fixed and $k$ tending to infinity, the second factor becomes $\frac{k}{k-\frac{1}{p-1}} \left(1+o(1)\right)$. 
\end{ex}

Hence all cyclotomic fields asymptotically belong to a class $\Dcal$ with $\alpha = 1$. Finally, Proposition~\ref{prop:NonEx} leads to the following statement: 

\begin{cor} \label{cor:ClassD}
Asymptotically, the classes $\Dcal_{n_0,d_0,\alpha,\gamma}$ with $\alpha \in [0,1]$ include all number fields.
\end{cor}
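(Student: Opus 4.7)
The plan is to associate to each number field $\KK$ a natural exponent $\alpha_\KK$ measuring the size of its degree against $\log\DD$, and then to apply Proposition~\ref{prop:NonEx} to show $\alpha_\KK \leq 1$ asymptotically, so that $\KK$ falls into a class $\Dcal_{n_0,d_0,\alpha,\gamma}$ with $\alpha\in[0,1]$.

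Concretely, for any $\KK$ of degree $n$ with discriminant $\DD$ large enough that $\log\DD>\log\log\DD$, I would set
\[\alpha_\KK := \frac{\log n}{\log(\log\DD/\log\log\DD)},\]
so that $n=(\log\DD/\log\log\DD)^{\alpha_\KK}$ by construction. The Minkowski--Stirling argument carried out inside the proof of Proposition~\ref{prop:NonEx} already gives $\alpha_\KK \leq 1+o(1)$, which corresponds exactly to the crude bound $\alpha\leq 1+\varepsilon$ mentioned just before the proposition. The remaining task is to push this down to $\alpha_\KK \leq 1$ using the proposition itself.

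For the key step, fix any $\varepsilon>0$ and any $n_0>1$. A field with $\alpha_\KK \geq 1+\varepsilon$ satisfies
\[n = \left(\frac{\log\DD}{\log\log\DD}\right)^{\alpha_\KK} \geq \left(\frac{\log\DD}{\log\log\DD}\right)^{1+\varepsilon} \geq \frac{1}{n_0}\left(\frac{\log\DD}{\log\log\DD}\right)^{1+\varepsilon},\]
so Proposition~\ref{prop:NonEx} applied with $\alpha=1+\varepsilon$ forbids any infinite family of such fields. Hence only finitely many number fields satisfy $\alpha_\KK \geq 1+\varepsilon$. Letting $\varepsilon$ run through a sequence tending to $0$ yields $\alpha_\KK \leq 1$ asymptotically, in the sense that $\limsup \alpha_{\KK_i} \leq 1$ along any sequence with $\DDi \to\infty$.

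To wrap up, for each $\KK$ with $\alpha_\KK\in[0,1]$ I pick $\alpha=\alpha_\KK$ and any $n_0>1$ arbitrarily close to $1$; the degree inequalities in the definition of $\Dcal_{n_0,d_0,\alpha,\gamma}$ then hold by construction. For the height requirement, any monic defining polynomial of $\KK$ has finite height, so it is routine to choose $d_0$ and $\gamma\geq 1-\alpha$ large enough that the bound $\log H(T)\leq d_0(\log\DD)^\gamma(\log\log\DD)^{1-\gamma}$ is satisfied. The main conceptual obstacle is the key step above: everything hinges on squeezing out of Proposition~\ref{prop:NonEx} the strict asymptotic bound $\alpha_\KK \leq 1$ rather than just $\alpha_\KK \leq 1+\varepsilon$, which is precisely the refinement over the crude Minkowski bound advertised in the paragraph preceding the corollary.
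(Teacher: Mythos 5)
Your route is the same one the paper intends --- the corollary is stated there as a direct consequence of Proposition~\ref{prop:NonEx}, with no further argument --- but your attempt to make it pointwise via $\alpha_\KK$ exposes a genuine gap at the last step. For each fixed $\ve>0$ you do get that only finitely many fields satisfy $\alpha_\KK\geq 1+\ve$ (two small caveats: the proposition as stated assumes both inequalities, so you should either remark that its proof only uses the lower one, or note that Minkowski's bound $n=O(\log\DD)$ makes the upper inequality automatic for large $\DD$). But letting $\ve\to 0$ only gives $\limsup_i \alpha_{\KK_i}\leq 1$; it does \emph{not} give ``$\alpha_\KK\leq 1$ for all but finitely many fields'', and your wrap-up paragraph only places fields with $\alpha_\KK\in[0,1]$ into a class. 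Fields with $\alpha_\KK\in\left(1,1+o(1)\right)$ are left uncovered, and they occur in infinite families: in an infinite unramified (class field) tower the root discriminant is constant, so $\log\DD = n\log r$ with $r$ fixed, whence $\alpha_\KK = \frac{\log n}{\log n-\log\log n+O(1)}>1$ for all large $n$, while the membership condition for $\alpha=1$, namely $n\leq n_0\,\frac{\log\DD}{\log\log\DD}$, fails for every fixed constant $n_0$ once $n$ is large; by the proposition such fields also lie in no class with fixed $\alpha>1$.

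Consequently the strong pointwise reading --- every field of sufficiently large discriminant lies in some $\Dcal_{n_0,d_0,\alpha,\gamma}$ with $\alpha\in[0,1]$ and \emph{constant} $n_0$ --- cannot be squeezed out of Proposition~\ref{prop:NonEx}, and the example above shows it is not available at all with fixed parameters. The corollary's ``asymptotically'' has to be read the way the paper uses it, family-wise: since the proposition shows that a class with $\alpha>1$ contains only finitely many fields, no infinite family (hence no asymptotic complexity statement) is lost by restricting to $\alpha\in[0,1]$. So either state your conclusion in that weaker form, replacing ``$\alpha_\KK\leq 1$ asymptotically'' by ``no infinite family sits in a class with $\alpha>1$'', or allow the pair $(\alpha,n_0)$ to depend on the field, in which case the $\alpha_\KK$ bookkeeping becomes unnecessary. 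Your treatment of the height condition is fine per field, since the definition imposes no upper bound on $\gamma$.
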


Note that despite~\cite[Corollary~3.3]{GJ16}, we do not specify the condition $\gamma$ in $[1-\alpha,1]$ in this result. Indeed when $\alpha \geq \frac{1}{2}$, finding the smallest height defining polynomial costs more than computing the class group. In these cases, it is preferable to work with the input polynomial. Another possibility is to perform only a partial reduction. More precisely, we may use the reduction algorithm described in~\cite[Section 4.4]{Coh93} which consists in computing an LLL-reduced basis of the lattice of algebraic integers. Assuming that an integral basis is already known, the runtime is polynomial in $\log \DD$. Eventually, for the reminder of the article, we focus our study on classes~$\Dcal$ with $\alpha \in \left[\frac{1}{2},1\right]$ and $\gamma \geq 1-\alpha$. Indeed, although the algorithm works for~$\alpha \leq \frac{1}{2}$, the complexity is larger than what is stated in~\cite{GJ16}.

\section{The relation collection} \label{sec:algo}

The core idea is presented by Biasse in~\cite{Bia14b}: the generation of the relations based on BKZ-reductions of ideal lattices. The strategy is still the same as Buchmann's work~\cite{Buc90}: we reduce an ideal $\afrak$, using lattice techniques, in order to find another ideal $\bfrak$ that belongs to the same class. While the algorithm of Buchmann looks for a shortest non-zero vector --- whose runtime is polynomial in the size of the discriminant but exponential in the extension degree --- the method of Biasse involves BKZ-reductions, that offer a trade-off between the time spent in the reduction and the approximation factor of the short vectors. This leads to a subexponential algorithm that allows both the discriminant and the degree to tend to infinity. When combined with the linear algebra and regulator computation, it leads to the following theorem:

\begin{thm}\cite[Theorem~6.1]{BF14} \label{thm:BF14}
Under ERH and smoothness heuristics, the presented algorithm computes the class group structure together with compact representations of a fundamental system of units of a number field $\KK$ of degree $n$ and discriminant $\Delta_\KK$ in time $L_\DD(a)$~with
\begin{itemize}
\item $a = \frac{2}{3}+\ve$ \quad for $\ve >0$ arbitrary small in the general case;
\item $a = \frac{1}{2}$ \qquad\, when $n \leq (\log \DD)^{3/4-\ve} $ for $\ve > 0$ arbitrary small.
\end{itemize}
\end{thm}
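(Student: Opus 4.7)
The plan is to apply the index calculus framework from Section~\ref{sec:genst} with parameters chosen to balance relation collection against linear algebra. Fix a factor base $\Bcal$ of prime ideals of norm at most $B = \LL_{\DD}(\beta,c_b)$; under ERH the classes of such ideals generate $\Cl$. For the relation collection, I would take a random product $\afrak = \prod \pfrak_i^{e_i}$ of factor-base ideals, form an ideal lattice for $\afrak$ (through its Minkowski embedding) of covolume essentially $\mathcal{N}(\afrak)\sqrt{\DD}$, and BKZ-reduce it with block size $k$. A short vector corresponds to an element $x \in \afrak$ whose norm is bounded by $\gamma_k^{n(n-1)/(k-1)} \cdot \mathcal{N}(\afrak) \cdot \sqrt{\DD}$, where the BKZ approximation factor behaves like $2^{O(n \log k / k)}$ per coordinate. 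By Equation~\eqref{eq:GenVal}, such an $x$ yields a relation whenever $\gen{x}\afrak^{-1}$ factors over $\Bcal$, that is, whenever $\mathcal{N}(x)/\mathcal{N}(\afrak)$ is $B$-smooth.

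The analysis then proceeds in three steps. First, express $\log|\mathcal{N}(x)|$ as $c_\mu \cdot (\log\DD)^\mu(\log\log\DD)^{1-\mu}$ for some exponent $\mu$ depending on $n$, $k$ and~$\beta$, so that the Canfield--Erd\H{o}s--Pomerance heuristic gives a $B$-smoothness probability of $\LL_{\DD}\!\bigl(\mu-\beta,\,-c_\mu(\mu-\beta)/c_b\bigr)^{-1}$. Second, bound the relation collection cost as the product of (the number of trials needed to produce $\approx N = |\Bcal|$ relations), (the per-trial BKZ cost, which is subexponential in~$k$), and (the smoothness test, which is polynomial). Third, account for the linear algebra on the $N \times N$ sparse matrix, whose cost is $\LL_{\DD}(\beta)^{\omega}$ up to polynomial factors via Wiedemann-style methods, and the verification against the Euler product~\eqref{PE} together with the compact-representation computation for the units extracted from kernel vectors.

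The optimization is the algebraic heart of the proof: minimize the maximum of the three costs above with respect to $(\beta, c_b, k)$. In the general regime where the degree $n$ may be as large as $\Theta(\log \DD / \log\log\DD)$, the $\gamma_k^{n^2/k}$ factor in the norm of $x$ dominates unless $k$ itself is subexponential, and the optimal compromise is $\beta = 2/3$ with a block size forced to grow; the $\varepsilon$ in $a = 2/3+\varepsilon$ absorbs both the $o(1)$ terms of the BKZ analysis and the slack in Canfield--Erd\H{o}s--Pomerance. When $n \leq (\log\DD)^{3/4-\varepsilon}$, the exponent $n^2/k$ stays small enough that one can afford $\beta = 1/2$ while keeping the BKZ contribution bounded by $\LL_{\DD}(1/2)$, which yields the second case.

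The main obstacle I expect is controlling the norms of the BKZ-reduced elements: one must translate the length of a short vector in the Minkowski embedding into a bound on $|\mathcal{N}(x)|$, which is not the geometric mean of the embeddings because the lattice basis is far from balanced. Handling this rigorously requires either a detour through the logarithmic (Dirichlet) embedding --- where BKZ must be applied simultaneously to obtain a bound of the right shape --- or a sequence of ``pull-back'' reductions of the form used in~\cite{BF14}. A secondary difficulty is producing \emph{compact representations} of fundamental units: a direct representation has bit-size $\LL_{\DD}$, so one must accumulate the units implicitly through the BKZ transformation matrices, which requires careful bookkeeping but does not affect the exponent~$a$.
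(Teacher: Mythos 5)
Your proposal takes essentially the same route as the paper: the statement is in fact quoted from \cite{BF14} rather than reproved, but the algorithm the paper describes in Section~\ref{sec:DescAlgo} and analyzes in detail for its simplified variant in Section~\ref{sec:Comp} is exactly your plan --- random power-products of factor-base ideals, BKZ-reduction of the canonically embedded ideal lattice of determinant $\Ncal(\afrak)\sqrt{\DD}$, the Canfield--Erd\H{o}s--Pomerance-type smoothness heuristic, linear algebra, and balancing the smoothness-bound and block-size exponents, with the $3/4$ threshold arising precisely because $n^2\log\beta/\beta$ must stay below $\log\DD$ when the block-size is $(\log\DD)^{1/2}$. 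The one ``main obstacle'' you anticipate is not one: since every embedding satisfies $|\sigma_i(x_v)|\le\|v\|$, one gets $\Ncal(x_v)\le\|v\|^n$ immediately (this is exactly the step used after Theorem~\ref{thm:BKZ} to obtain $\Ncal(\bfrak)\le\beta^{\frac{n(n-1)}{2(\beta-1)}}\sqrt{\DD}$), so no detour through the logarithmic embedding or pull-back reductions is needed, and your remaining imprecisions (norm-smoothness versus ideal-smoothness over $\Bcal$, Wiedemann versus the HNF of Storjohann--Labahn) only affect the second constant, not the exponent $a$.
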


Figure~\ref{fig:BF14} presents the complexity of class group computations as a function of~$\alpha$, \ie the extension degree, prior to the improvements that are presented later in this article. It is based on the classification obtained in Section~\ref{sec:class} and, for $\alpha \leq \frac{1}{2}$, on the results of~\cite{GJ16}.

\begin{figure}[h]
\centering
\begin{tikzpicture}[scale=0.9]
\fill[cyan!50] (0,0) rectangle (12,3);
\fill[cyan!50] (9,0) rectangle (12,4);

\draw[thick, cyan] (6,3) -- (9,3);
\draw[thick, cyan] (9,3) -- (9,4) -- (12,4);
\draw[thick, cyan] (12,0) -- (12,4);

\fill[cyan!40] (0,3) -- (4,2) -- (6,3) -- cycle;
\fill[cyan!30] (0,3) -- (4.66,2.33) -- (6,3) -- cycle;
\fill[cyan!20] (0,3) -- (5.33,2.66) -- (6,3) -- cycle;
\draw[dashed, cyan] (0,3) -- (5.33,2.66);
\draw[dashed, cyan] (0,3) -- (4.66,2.33);
\draw[dashed, thick, cyan] (0,3) -- (6,3) -- (4,2) -- (0,3);
\draw[<->, violet] (3.5,3) node[above]{depending on $\gamma$} -- (3.5,2.5);

\node at (7.5,2) [violet]{$L_\DD\left(\frac{1}{2}\right)$};
\node at (10.5,2) [violet]{$L_\DD\left(\frac{2}{3}+\ve\right)$};
\node at (3,1.5) [violet]{$L_\DD\left(\max(\alpha,\frac{\gamma}{2})\right)$};
\node at (7.5,1) {\cite{BF14}};
\node at (10.5,1) {\cite{BF14}};
\node at (3,0.75) {\cite{GJ16}};

\draw[->] (0,0) -- (0,5); 
\node at (-0.5,5) {$a$};
\foreach \x/\xtext in {0,2/{\frac{1}{3}},3/{\frac{1}{2}},4/{\frac{2}{3}}}
{\draw (0.1cm,\x) -- (-0.1cm,\x) node[left] {$\xtext\strut$};}
\draw[->] (0,0) -- (13,0);
\foreach \x/\xtext in {0,3/{\frac{1}{4}},4/{\frac{1}{3}},6/{\frac{1}{2}},8/{\frac{2}{3}},9/{\frac{3}{4}},12/1}
{\draw (\x,0.1cm) -- (\x,-0.1cm) node[below] {$\xtext\strut$};}
\node at (13,-0.5) {$\alpha$};

\draw[loosely dashed, gray!50] (0,4) -- (9,4);
\draw[loosely dashed, cyan] (9,0) -- (9,3);
\draw[loosely dashed, cyan] (6,0) -- (6,3);
\draw[loosely dashed, cyan] (4,0) -- (4,2);
\end{tikzpicture}
\caption{Complexity obtained by prior algorithms.}
\label{fig:BF14}
\end{figure}
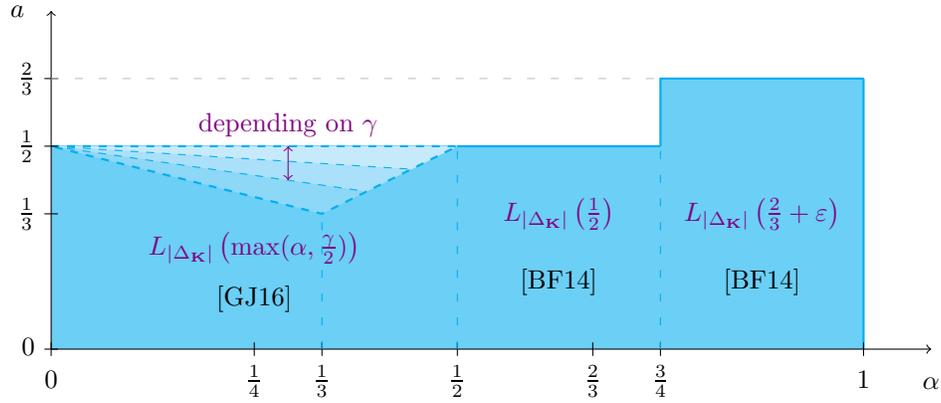

\subsection{Description of the algorithm of Biasse and Fieker} \label{sec:DescAlgo}

In~\cite{Bia14b}, and so in~\cite{BF14}, the relation collection is derived from a reduction algorithm that given an ideal $\afrak$ returns a smooth ideal $\bfrak$ that is in the same class as $\afrak$.
Then, applying this reduction to every ideal belonging to the factor base, we get the relations we are expecting.

We recall that the factor base $\Bcal = \{\pfrak_1, \dotsc, \pfrak_{|\Bcal|}\}$ consists of all prime ideals of~$\OO_\KK$ whose norm is below a bound $B = \LL_\DD(\beta,c_b)$, for $\beta \in [0,1]$ and $c_b > 0$. We also fix $\ve>0$ arbitrarily small and $\afrak$ an ideal of $\OO_\KK$.

From the ideal $\afrak$, Biasse and Fieker derive an ideal $\cfrak$ in $\OO_\KK$ by $\cfrak = \Ncal(\afrak) \cdot \afrak^{-1}$. This step consists of taking the inverse of $\afrak$, and includes a norm multiplication to keep an integral ideal. Then, similar to what Buchmann did, they choose an element $x \in \cfrak$, that is small in a certain sense, and define $\bfrak$ as the unique integral ideal that satisfies $\gen{x} = \cfrak \bfrak$. This $\bfrak$ is well-defined, as $x \in \cfrak$ implies $\gen{x} \subset \cfrak$. Finally, $\bfrak$ is in the same class as $\afrak$, as $\bfrak = \gen{x} \cfrak^{-1} = \gen{\frac{x}{\Ncal(\afrak)}} \afrak$.

\subsection*{Lattice reductions.} For finding these small elements in given ideals, it is common to consider an ideal as a lattice. For a degree-$n$ number field $\KK$, there exist $n=r_1+2r_2$ complex embeddings from $\KK$ to $\CC$.
We almost always order them in the following way: $\sigma_1,\dotsc,\sigma_{r_1}$ for the real embeddings and $\sigma_{r_1+r_2+i} = \overline{\sigma_{r_1+i}}$ for $1\leq i\leq r_2$. Hence, we get an embedding $\sigma$, called the \emph{canonical embedding}, \[\sigma : \KK \longrightarrow \RR^{r_1} \times \CC^{r_2}.\] For practical purpose, it is often considered as an $r_1+2r_2=n$-tuple of real numbers.
\begin{lem} \label{lem:DetLat}
For any integral ideal $\afrak$ of $\KK$, $\sigma(\afrak)$ is a lattice of $\RR^n$ and \[\det \sigma(\afrak) = \sqrt{\DD}\cdot\Ncal(\afrak).\]
\end{lem}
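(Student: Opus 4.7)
The plan is to reduce the general case to the case $\afrak = \OO_\KK$ via the index $[\OO_\KK : \afrak] = \Ncal(\afrak)$. First I would fix an integral basis $(\omega_1,\dotsc,\omega_n)$ of $\OO_\KK$ and check that $\sigma$ is injective (e.g.\ by linear independence of the $n$ distinct embeddings) and $\ZZ$-linear, so that $\sigma(\OO_\KK)$ is a finitely generated free $\ZZ$-submodule of $\RR^n$ of rank $n$. To conclude that it is a lattice of full rank it then suffices to show that the $n$ real vectors $\sigma(\omega_i)$ are $\RR$-linearly independent, and this will drop out of the determinant computation below (if the determinant is nonzero, the family is a basis of $\RR^n$).

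The core computation is the identity $\det \sigma(\OO_\KK) = \sqrt{|\Delta_\KK|}$. I would let $M$ be the real $n\times n$ matrix whose rows are the $\sigma(\omega_i)$ and compare it to the complex matrix $A = (\sigma_j(\omega_i))_{i,j}$ built from all $n$ embeddings (real ones, then each complex embedding together with its conjugate). By the very definition of the discriminant,
\[
|\det A|^2 \;=\; \bigl|\det(\sigma_j(\omega_i))\bigr|^2 \;=\; |\Delta_\KK|.
\]
Passing from $A$ to $M$ amounts to performing, for each of the $r_2$ conjugate pairs of columns, the $\CC$-linear change of variables $(z,\bar z) \mapsto (\Re z, \Im z)$ (up to the normalization implicit in the identification $\RR^{r_1}\times\CC^{r_2} \simeq \RR^n$ used in the paper). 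A short $2\times 2$ determinant calculation then shows that each such block contributes a factor of absolute value $1$, so $|\det M| = |\det A| = \sqrt{|\Delta_\KK|}$, which is nonzero and therefore also justifies the full-rank claim.

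For a general integral ideal $\afrak$, I would use that $\afrak$ is a free $\ZZ$-submodule of $\OO_\KK$ of rank $n$ with index $[\OO_\KK:\afrak] = \Ncal(\afrak)$. Writing a $\ZZ$-basis of $\afrak$ in the basis $(\omega_i)$ gives a transition matrix $U \in M_n(\ZZ)$ with $|\det U| = \Ncal(\afrak)$, and the corresponding matrix for $\sigma(\afrak)$ is $UM$. Taking absolute values of determinants yields
\[
\det \sigma(\afrak) \;=\; |\det U|\cdot|\det M| \;=\; \Ncal(\afrak)\cdot\sqrt{|\Delta_\KK|}.
\]

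The only delicate step is the $2\times 2$ determinant accounting for the identification of $\CC^{r_2}$ with $\RR^{2r_2}$: depending on the normalization chosen in the canonical embedding, one has to keep track of a factor $2^{-r_2}$ or $2^{r_2}$, and the statement of the lemma implicitly fixes the convention that makes this factor equal to $1$. Once that bookkeeping is done, all remaining steps are formal.
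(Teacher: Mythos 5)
The paper states this lemma without proof (it is a classical fact, found e.g.\ in Cohen's or Neukirch's books), so there is no internal argument to compare against; your proof is the standard one and its structure is sound: injectivity and $\ZZ$-linearity of $\sigma$, the discriminant identity $|\det(\sigma_j(\omega_i))|^2 = |\Delta_\KK|$ for an integral basis, full rank from the nonvanishing of the determinant, and the passage to a general integral ideal via a transition matrix $U \in M_n(\ZZ)$ with $|\det U| = [\OO_\KK : \afrak] = \Ncal(\afrak)$. The one point you should fix is the $2\times 2$ block computation, where your two statements are inconsistent with each other: under the plain identification $(z,\bar z) \mapsto (\Re z, \Im z)$ the block
\[
\begin{pmatrix} \tfrac12 & \tfrac{1}{2i} \\[2pt] \tfrac12 & -\tfrac{1}{2i} \end{pmatrix}
\]
has determinant of absolute value $\tfrac12$, not $1$, so with that convention the covolume is $2^{-r_2}\sqrt{\DD}\cdot\Ncal(\afrak)$; the constant in the lemma as stated is exact only for the normalization $(z,\bar z)\mapsto(\sqrt2\,\Re z,\sqrt2\,\Im z)$ (equivalently, the measure on $\RR^{r_1}\times\CC^{r_2}$ giving $\CC$ twice Lebesgue measure). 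Your closing paragraph correctly identifies this convention dependence, so the argument is essentially complete, but the in-line claim that each block ``contributes a factor of absolute value $1$'' should be amended accordingly. For the paper's purposes the discrepancy is harmless anyway: the lemma is only used through $(\det\Lcal)^{1/n}$ in the BKZ bound, and a factor $2^{-r_2/n}\leq 1$ only improves the estimate on $\Ncal(\bfrak)$.
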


Finding small elements in a lattice is a well-studied problem. We know that this problem is exponential in the dimension if we want the smallest vector, but polynomial if we allow an exponential approximation factor. A balance has been found using BKZ algorithm: a subexponential algorithm with a subexponential approximation factor. It consists in reducing blocks of size $\beta \leq n$, so that the complexity is exponential in the block-size $\beta$. The result we use in this article is derived from the work of Micciancio and Walter~\cite{MW16}.

\begin{thm} \label{thm:BKZ} 
The smallest vector $v$ output by the BKZ algorithm with block-size~$\beta$ has a norm bounded by
\[\|v\| \quad \leq \quad \beta^{\frac{n-1}{2(\beta-1)}} \cdot (\det \Lcal)^{\frac{1}{n}}.\] The algorithm runs in time $\Poly(n,\log \|B_0\|) \left(\frac{3}{2}\right)^{\beta/2+o(\beta)}$, where $B_0$ is the input basis.
\end{thm}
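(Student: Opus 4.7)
The plan is to combine a classical Schnorr-style analysis of block reduction with a modern SVP subroutine for each local block, so that both estimates of the theorem follow from results already in the literature rather than from a fresh analysis.

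For the length bound, I would start from Schnorr's inequality: any BKZ-$\beta$ reduced basis $(b_1,\dotsc,b_n)$ of $\Lcal$ satisfies
\[\|b_1\| \leq \gamma_\beta^{\frac{n-1}{2(\beta-1)}} \cdot (\det \Lcal)^{1/n},\]
where $\gamma_\beta$ denotes Hermite's constant in dimension~$\beta$. The short argument applies the SVP guarantee on each projected block $\pi_i(b_i),\dotsc,\pi_i(b_{i+\beta-1})$ (with $\pi_i$ the orthogonal projection past $b_1,\dotsc,b_{i-1}$), chains the resulting inequalities on the Gram--Schmidt norms $\|b_i^\ast\|$, and telescopes the product to recover $\|b_1\|$ in terms of $\det \Lcal$. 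Substituting the Minkowski-type estimate $\gamma_\beta \leq \beta$ then yields exactly the norm bound of the statement.

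For the running time, the plan is to combine two ingredients. First, the Micciancio--Walter variant of BKZ produces a basis satisfying the above Schnorr inequality using only $\Poly(n,\log\|B_0\|)$ calls to an SVP oracle in dimension~$\beta$, even when $\beta$ grows with~$n$; this is the crucial improvement over the naive tour-counting analysis, which only yields an exponential bound on the number of tours. Second, the sieving algorithms of Becker, Ducas, Gama and Laarhoven solve SVP in dimension~$\beta$ in time $\bigl(\tfrac{3}{2}\bigr)^{\beta/2+o(\beta)}$. Multiplying the number of oracle calls by the per-call cost delivers the announced total complexity.

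The main subtlety is arguably just a careful bookkeeping: one must check that the Micciancio--Walter construction genuinely achieves the first Schnorr inequality (and not merely a weaker Hermite-type estimate) within a polynomial number of oracle calls, and that the $o(\beta)$ term in the sieving cost remains uniform in the regime where $\beta$ is the block-size parameter later inherited from the class-group algorithm. Once these points are verified, the theorem reduces to substituting the bound $\gamma_\beta \leq \beta$ and propagating the hidden $\Poly$-factors through.
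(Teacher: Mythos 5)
Your proposal is correct and follows essentially the same route as the paper: the paper also obtains the norm bound by citing the Micciancio--Walter result (their Theorem~1, which is precisely the Schnorr-type inequality with the Hermite constant $\gamma_\beta$), replaces $\gamma_\beta$ by an upper bound in $O(\beta)$, and gets the runtime by combining the polynomial number of SVP calls in their algorithm with the $\left(\frac{3}{2}\right)^{\beta/2+o(\beta)}$ sieving cost of Becker--Ducas--Gama--Laarhoven. Your extra sketch of the telescoping argument behind Schnorr's inequality and the bookkeeping caveats are consistent with, but not needed beyond, what the paper's citation-based proof does.
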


\begin{proof}
The bound we get is a direct consequence of~\cite[Theorem~1]{MW16}. We only replaced the \emph{Hermite constant} $\gamma_\beta$ by an upper bound in $O\left(\beta\right)$. The cost analysis is derived from a quick study of~\cite[Algorithm~1]{MW16}, and the complexity of the \emph{Shortest Vector Problem} (SVP) is below $\left(\frac{3}{2}\right)^{\beta/2+o(\beta)}$ operations, according to~\cite{BDGL16}.
\end{proof}

The difference between the works of Buchmann and Biasse-Fieker appears in the way to choose the \emph{small} element $x$ in the ideal $\cfrak$: Biasse and Fieker replace shortest vector computations as used by Buchmann by BKZ-reductions.

\subsection*{Smoothness of ideals.} We provide in Appendix~\ref{app:smooth} a brief reminder about smoothness properties and tests, for ideals in number fields. The main assumption we need to do is the following one. It is a direct consequence of what we know for integers.

\begin{heur} \label{heur:smooth}
The probability $\Pcal(x,y)$ that an ideal of norm bounded by $x$ is \mbox{$y$-smooth} satisfies \[\Pcal(x,y) \geq e^{-u (\log u)(1+o(1))} \quad \text{for} \quad u=\frac{\log x}{\log y}.\]
\end{heur}

Because of the assumption of Heuristic~\ref{heur:smooth}, we know that we have to repeatedly select elements in $\afrak$ before finding one that leads to a smooth ideal $\bfrak$. Hence, we require a randomization process that given the ideal $\afrak$, produces as many ideals as required to guarantee to get a smooth~$\bfrak$. This is done by considering ideals of the form $\afrak \cdot \prod \pfrak_i ^{e_i}$, where the $\pfrak_i$ are prime ideals whose norms are below the smoothness bound $B$.

Then for each ideal $\tilde{\afrak} = \afrak \cdot \prod \pfrak_i ^{e_i}$, they compute the BKZ$_\beta$-reduced basis of the integral ideal $\tilde{\cfrak} = \Ncal(\tilde{\afrak}) \cdot \tilde{\afrak}^{-1}$, with a block-size~$\beta$ as determined below. This BKZ-reduction is performed on the ideal lattice $\sigma(\tilde{\cfrak})$, defined by the canonical embedding of $\tilde{\cfrak}$. As recalled earlier, it may be viewed as a lattice in $\RR^n$ using the Minkowski map.

Denoting by $x_v$ the algebraic integer corresponding to the smallest vector $v$ of the BKZ-reduced basis, they set $\tilde{\bfrak} = \gen{\frac{x_v}{\Ncal(\tilde{\afrak})}} \tilde{\afrak}$. Then $\tilde{\bfrak}$ is in the same class as $\tilde{\afrak}$ and
\begin{equation} \label{eq:BkNorm}
\Ncal(\tilde{\bfrak}) \leq \beta^{\frac{n(n-1)}{2(\beta-1)}} \sqrt{\DD}.
\end{equation}
Indeed $\Ncal(\tilde{\cfrak}) = \Ncal(\tilde{\afrak})^{n-1}$ and $\|v\| \leq \beta^{\frac{n-1}{2(\beta-1)}} \Ncal(\tilde{\cfrak})^\frac{1}{n} \DD^{\frac{1}{2n}}$ from Theorem~\ref{thm:BKZ} and Lemma~\ref{lem:DetLat}.
Then $\Ncal(\tilde{\bfrak}) \leq \left(\frac{\|v\|}{\Ncal(\tilde{\afrak})}\right)^n \Ncal(\tilde{\afrak})$ leads to the expected result.

If $\tilde{\bfrak}$ splits over the factor base $\Bcal$, then there exist integers $e_i'$ such that $\tilde{\bfrak} = \prod \pfrak_i^{e_i'}$. Thus, taking care of the randomized factor, we get that the ideal $\frac{x_v}{\Ncal(\tilde{\afrak})} \afrak$ also splits over $\Bcal$ as $\prod \pfrak_i^{e_i'-e_i}$. In the end, if $\afrak$ splits over $\Bcal$, then the principal ideal
\[\gen{\frac{x_v}{\Ncal(\afrak)}} \text{ also splits over } \Bcal.\]
Therefore we have derived a relation in the kernel of the surjective morphism defined in Equation~\eqref{eq:Morph}. If $\tilde{\bfrak}$ does not split, then we try another~$\tilde{\afrak}$. To bound the number of relations that would be sufficient, they state the following heuristic. Because at least $N=|\Bcal|$ are required, they choose the largest bound that does not increase the complexity.

\begin{heur} \label{heur:NbRel}
There exists a value $K$ that is negligible compared with $|\Bcal|$ such that collecting $K\cdot|\Bcal|$ relations suffices to obtain a relation matrix that has full-rank.
\end{heur}

Finally, using the right parameters and ideals $\afrak$ from the factor base, they get the complexities given in Theorem~\ref{thm:BF14}. With a factor base size in $L_\DD(a)$ and a block-size $(\log \DD)^a$, the overall complexity turns out to be in $L_\DD(a)$.

\subsection{Our proposition for a simpler algorithm} \label{sec:DescSimplerAlgo}

Instead of precisely studying the complexity of the algorithm of Biasse and Fieker, we rather provide a simpler version, more adapted to our problem. We focus on the relation collection in class group computations for number fields, without using information brought by the defining polynomial. Hence, ideals are viewed as lattices in $\RR^n$.

First we do not do the reduction of a specific ideal, but we take as inputs random power-products of factor-base elements. Let $k, A > 0$ be integers in $\Poly(\log \DD)$. We choose $k$ prime ideals $\pfrak_{j_1}, \dotsc, \pfrak_{j_k}$ in the factor base. For any $k$-tuple $(e_1, \dotsc, e_k) \in \{1,\dotsc, A\}^k$, we set $\afrak=\prod\limits_{i=1}^k \pfrak_{j_i}^{e_i}$ and we have
\[\Ncal\left(\afrak\right) = \Ncal\left(\prod_{i=1}^k \pfrak_{j_i}^{e_i}\right) \leq \prod_{i=1}^k \Ncal\left(\pfrak_{j_i}\right)^{e_i} \leq L_\DD\left(\beta,c_b\right)^{k \cdot A}.\]
This initialization step can be done by choosing the tuple $(e_1, \dotsc, e_k)$ uniformly at random and~$k$ prime ideals in $\Bcal$. Since from Landau's Prime Ideal Theorem~\cite{Lan03}, $|\Bcal| = L_\DD(\beta,c_b)$, the set of possible samples is large enough for our purposes. In addition, the norm of the input ideals~$\afrak$ is always polynomial in the size of the factor base.

Second we reduce the lattice defined by the ideal $\afrak$ itself, not its normalized inverse. Instead of performing the normalization explained in the previous section, we directly search for a small vector in the ideal $\afrak$ --- more precisely, in the lattice $\sigma(\afrak)$ defined by the canonical embedding. Hence we find a small vector $v$ that is the embedding of an algebraic integer $x_v$. Because $x_v$ lies in $\afrak$, there exists a unique integral ideal $\bfrak$ such that \[\gen{x_v} = \afrak \bfrak.\] 
The attentive reader should point out that the ideals $\afrak$ and $\bfrak$ do not belong to the same ideal class as before. However, this is not so important, because $\bfrak^{-1}$ for instance shares the same class with $\afrak$. Our ultimate goal is to figure out a principal ideal that is $B$-smooth, and this is achieved with our method too.

For the recovery of the algebraic integer $x_v$ associated to the vector $v$, one can make use of the transformation matrix corresponding to the variable change. Another possibility is to work directly with the conjugates and go back to the algebraic representation using round-off, as mentioned in~\cite[Section~4.2.4]{Coh93}.

Third we use the reduction algorithm described by Espitau and Joux in~\cite{EJ18}. It works on the Gram matrix of the lattice instead of the basis matrix, and requires less precision. From a practical perspective, their algorithm is able to ensure that the input precision suffices and certifies that the output is an exact reduced basis. A precision analysis similar to the one in~\cite[Section~5]{GJ16} leads to the conclusion that the required precision is polynomial in the size of the input. Indeed, we only have to replace the weight term $c^k$ by the norm of the ideal $L_\DD(\beta,c_b)^{k \cdot A}$ whose size is still polynomial in $\log \DD$.

\begin{algorithm}[h]
\caption{Deriving relations from BKZ$_\beta$-reduction}
\label{algo:DerivBKZ}
\begin{algorithmic}[1]
\REQUIRE The factor base $\Bcal$, the block-size $\beta$, the bounds $k$ and $A$ for building ideals.
\ENSURE The relations stored.
\WHILE {not enough relations are found}
\STATE Choose at random $k$ prime ideals $\pfrak_{j_1},\dotsc,\pfrak_{j_k}$ in the factor base $\Bcal$
\STATE Choose at random $k$ exponents $e_{j_1},\dotsc,e_{j_k}$ in $\{1,\dotsc,A\}$
\STATE Set $\afrak = \prod \pfrak_i^{e_i} \quad$, for $i \in \{1,\dotsc,|\Bcal|\}$, with $e_i=0$ if $i \notin \{j_1,\dotsc,j_k\}$
\STATE Find a BKZ$_\beta$-reduced basis of $\afrak$
\STATE Let $x_v$ denote the algebraic integer corresponding to the smallest vector of this basis
\STATE Set $\bfrak$ as the unique ideal such that $\gen{x_v} = \afrak \bfrak$
\IF {$\bfrak$ is $B$-smooth}
\STATE Let $e'_i$ such that $\bfrak = \prod \pfrak_i^{e'_i}$
\STATE Store the relation $\gen{x_v} = \prod \pfrak_i^{e_i+e'_i}$
\ENDIF
\ENDWHILE
\end{algorithmic}
\end{algorithm}

\begin{rem} \label{rem:rand}
Another improvement should be to test for smoothness all the elements whose norms are below the bound given by the theoretic study of BKZ reduction.
The first vector output by the BKZ reduction has norm below $\beta^{\frac{n-1}{2(\beta-1)}} \Ncal(\afrak)^{\frac{1}{n}} \DD^{\frac{1}{2n}}$ and this bound is the one we used for the complexity analysis. However, if several small vectors have their norm below this bound, then the rest of the algorithm works similarly for them, and we have saved the cost of BKZ reductions. Hence, one may try the first small linear combinations between vectors of the reduced basis output after reduction. This is only a practical improvement, because asymptotically the number of BKZ reductions performed is not taken into account (see Section~\ref{sec:Comp}).
\end{rem}

The algorithm stops when enough relations are collected. At this point, it is necessary to rely on a heuristic (as Heuristic~\ref{heur:NbRel}) in order to guarantee the result. We propose a new one that suffices for our purposes. We want the number of relations to be sufficient to generate the whole set of relations described in Equation~\eqref{eq:Morph}. We~emphasize that there exist ideals in the factor base that are more important: the ones whose norm is below the \emph{Bach bound} $12(\log \DD)^2$. Thus we consider that the matrix construction is completed when the number of relations is larger than the number of ideals that occur and when all ideals of norm below Bach's bound are involved in at least one relation. This last condition means that the submatrix built from all the relations and only those ideals must have full-rank. In comparison with Heuristic~\ref{heur:NbRel}, our relation matrix may contain all-zero columns, which correspond to ideals in the factor base that are not involved in any of the relations. By construction their norms are necessarily larger than $12(\log \DD)^2$.

\begin{heur} \label{heur:NbRel+}
There exists $K$ negligible compared with $|\Bcal|$ such that collecting $K\cdot|\Bcal|$ relations suffices to obtain a relation matrix that generates the whole lattice of relations.
\end{heur}

\subsection{Parameter settings}

We consider as input a number field $\KK \in \Dcal_{n_0,d_0,\alpha,\gamma}$, with $\alpha \geq \frac{1}{2}$. We stress that no information is needed on the size of the defining polynomial --- namely on $\gamma$ --- for this algorithm. Table~\ref{tab:Param} lists the optimal choices for the factor-base bound $B$ and the block-size $\beta$ depending on $\alpha$, with a transition at $\alpha = \frac{3}{4}$ (as already mentioned by Biasse and Fieker). The parameter $c_b > 0$ is going to be determined later, based on the complexity analysis.

\renewcommand{\arraystretch}{1}
\begin{table}[h]
\[\begin{array}{c|cc}
& B & \beta\\
\hline
\multirow{2}{*}{$\frac{1}{2} \leq \alpha \leq \frac{3}{4}$} & \multirow{2}{*}{$\LL_\DD\left(\frac{1}{2}, c_b\right)$} & \multirow{2}{*}{$\left(\log \DD\right)^{\frac{1}{2}}$} \\ & & \\
\multirow{2}{*}{$\frac{3}{4} < \alpha \leq 1$} & \multirow{2}{*}{$\LL_\DD\left(\frac{2\alpha}{3}, c_b\right)$} & \multirow{2}{*}{$\left(\log \DD\right)^{\frac{2\alpha}{3}}$} \\ & & \\
\end{array}\]
\caption{Optimal choices for the factor-base bound and block-size depending on the extension degree.}
\label{tab:Param}
\end{table}

\section{Complexity analyses} \label{sec:Comp}

\subsection{The case $\alpha \leq \frac{3}{4}$} \label{sec:med}

According to~\cite{BF14}, when $\alpha \leq \frac{3}{4}$, we know that our algorithm should run in time $L_\DD \left( \frac{1}{2}, c_1 \right)$. We provide a detailed analysis to find an explicit expression for the constant $c_1$. Let $\KK$ be a number field belonging to~$\Dcal_{n_0,d_0,\alpha,\gamma}$ with $\alpha \in \left[\frac{1}{2},\frac{3}{4}\right] $, $\gamma \geq 1- \alpha$, $d_0 >0$ and $n_0>1$. The factor base $\Bcal$ is fixed as the set of all prime ideals of norm below $B = \LL_\DD\left(\frac{1}{2}, c_b\right)$, with $c_b > 0$ to be determined, and the block-size used in BKZ-reduction is $\beta = (\log \DD)^{\frac{1}{2}}$, according to Table~\ref{tab:Param}.

First, we analyze the BKZ-reduction. Before looking at the output, we focus on the cost of the reduction. By construction of ideal $\afrak$ --- see Section~\ref{sec:DescSimplerAlgo} --- its norm is polynomial in $L_\DD \left(\frac{1}{2}\right)$. Theorem~\ref{thm:BKZ} states that BKZ-reduction runs in time $\Poly\left(n,\log \Ncal(\afrak)\right) \cdot 2^{O(\beta)}$. Because the norm of $\afrak$ is upper bounded, it only remains to bound the factor $2^{O(\beta)}$. Denoting by $C$ the constant in the~$O$, we asymptotically obtain
$\log 2^{O(\beta)} = C\cdot\log 2 \cdot (\log \DD)^{\frac{1}{2}} \leq c (\log \DD)^{\frac{1}{2}} (\log \log \DD)^{\frac{1}{2}}$ for any constant $c>0$. Thus, we have shown that the runtime of the reduction algorithm is below~$L_\DD \left(\frac{1}{2},c\right)$ for every $c>0$.

Second, we estimate the norm of the new ideal $\bfrak$ built from the smallest vector returned by the reduction algorithm. From Theorem~\ref{thm:BKZ} and Lemma~\ref{lem:DetLat}, we deduce that the smallest vector $v$ of the BKZ$_\beta$-reduced basis has a norm that satisfies $\|v\| \leq \beta^{\frac{n-1}{2(\beta-1)}} \Ncal(\afrak)^{\frac{1}{n}} \DD^{\frac{1}{2n}}$. As $\Ncal(x_v) \leq \|v\|^n$, we directly derive that the norm of $\bfrak$ is upper bounded by $\Ncal(\bfrak) \leq \beta^{\frac{n(n-1)}{2(\beta-1)}} \sqrt{\DD}$ so that we deduce\footnote{Note that it is the same bound as in Equation~\eqref{eq:BkNorm}. Our adjustments in the algorithm do not affect this bound.}
\begin{eqnarray*}
\log\Ncal(\bfrak) &\leq& \frac{1}{2}\log \DD + \frac{n_0^2}{4} (\log \DD)^{2\alpha-\frac{1}{2}} (\log \log \DD)^{1-2\alpha} \\
&\leq& \frac{1}{2}\log \DD + c (\log \DD)^{2\alpha-\frac{1}{2}} (\log \log \DD)^{1-2\alpha+\frac{1}{2}} \quad \mbox{for all $c>0$} \\
&\leq& \frac{1}{2}\log \DD \big(1+o(1)\big) \\
\Longrightarrow \quad \Ncal(\bfrak) & \leq & L_\DD \left(1,\frac{1}{2}\right).
\end{eqnarray*}

Third, we have to express the probability for such a $\bfrak$ to be $B$-smooth. Assuming Heuristic~\ref{heur:smooth} allows us to get a probability of \[L_\DD \left(\frac{1}{2}, \frac{1}{4c_b}\right)^{-1}.\]
Hence, on average, testing $L_\DD \left(\frac{1}{2}, \frac{1}{4c_b}\right)$ ideals $\afrak$ leads to a single ideal $\bfrak$ that is $B$-smooth and thus to one relation. Assuming Heuristic~\ref{heur:NbRel+}, we need to find $L_\DD\left(\frac{1}{2}, c_b\right)$ relations. This requires testing for smoothness \[L_\DD\left(\frac{1}{2}, \frac{1}{4c_b} + c_b\right)\] ideals. From Appendix~\ref{app:smooth}, we know that each test costs $L_{L_\DD\left(\frac{1}{2}\right)}\left(\frac{1}{2}\right) = L_\DD\left(\frac{1}{4}\right)$, which is negligible. The reduction step, whose runtime is below $L_\DD \left(\frac{1}{2},c\right)$ for every $c > 0$, is also negligible. Hence the global complexity of the relation collection step is given by the number of ideals that we test, that is \[L_\DD\left(\frac{1}{2}, \frac{1}{4c_b} + c_b\right).\]

\subsection*{Complexity for the class group computation.}
Now that we know the complexity of the collection step, we look at the remaining parts of the computation to get the class group structure, in order to determine the best $c_b$. The relations are stored in a matrix of size $K\cdot N\times N$, with $N=|\Bcal|=L_\DD \left(\frac{1}{2},c_b\right)$. 
The results regarding linear algebra, precision and regulator computation are already studied by Biasse and Fieker in~\cite{BF14}. They show~\cite[Proposition~4.1]{BF14} that the class group structure is inferred from the relation matrix in time $L_\DD \left(\frac{1}{2},(\omega + 1)c_b\right)$, where~$\omega$ denotes the matrix multiplication exponent. This result essentially relies on the HNF algorithm of Storjohann and Labahn~\cite[Theorem~12]{SL96}.

The best choice for $c_b$ --- \ie the one that minimizes the complexity --- follows from balancing the runtimes of the collection and linear algebra phases. Thus the parameter $c_b > 0$ should satisfy
\[\frac{1}{4c_b}+c_b = (\omega +1)c_b \quad \Longleftrightarrow \quad c_b = \frac{1}{2\sqrt{\omega}}.\]

\begin{thm} \label{thm:med}
Assuming ERH and Heuristics~\ref{heur:smooth} and~\ref{heur:NbRel+}, for every number field~$\KK$ that belongs to $\Dcal_{n_0,d_0,\alpha,\gamma}$ with $\alpha \in \left[\frac{1}{2},\frac{3}{4}\right]$, our algorithm computes the class group structure and the regulator with runtime \[L_\DD \left(\frac{1}{2},\frac{\omega+1}{2\sqrt{\omega}}\right).\]
\end{thm}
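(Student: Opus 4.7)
The plan is to trace the algorithm step by step for a field $\KK \in \Dcal_{n_0,d_0,\alpha,\gamma}$ with $\alpha \in [\frac12,\frac34]$, using the parameters from Table~\ref{tab:Param}, and balance the runtimes of the two expensive phases. I would first establish that the per-iteration cost of Algorithm~\ref{algo:DerivBKZ} is dominated by the BKZ reduction and the smoothness test, both of which are negligible compared to $L_\DD(\frac12)$. For the BKZ step, since the input ideals $\afrak$ are constructed from at most $k=\Poly(\log\DD)$ factor-base elements with exponents in $\{1,\dots,A\}$, one has $\log\Ncal(\afrak)=O((\log\DD)^{1/2}\log\log\DD)$; combined with $\beta=(\log\DD)^{1/2}$, Theorem~\ref{thm:BKZ} gives a cost of $\Poly(n,\log\Ncal(\afrak))\cdot 2^{O(\beta)} \le L_\DD(\frac12,c)$ for every $c>0$. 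The smoothness test costs $L_\DD(\frac14)$ by the Appendix, which is also negligible.

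Next I would bound the norm of the ideal $\bfrak$ produced from the smallest BKZ-reduced vector. Combining Theorem~\ref{thm:BKZ} with Lemma~\ref{lem:DetLat} and the fact that $\Ncal(x_v)\le\|v\|^n$, I obtain the same bound as in Equation~\eqref{eq:BkNorm}, namely
\[
\log \Ncal(\bfrak) \;\le\; \tfrac12\log\DD \;+\; \tfrac{n_0^2}{4}(\log\DD)^{2\alpha-\frac12}(\log\log\DD)^{1-2\alpha}.
\]
Because $\alpha\le \frac34$, the exponent $2\alpha-\frac12$ is at most $1$, and a direct asymptotic comparison shows the second term is $o(\log\DD)$, so $\Ncal(\bfrak)\le L_\DD(1,\tfrac12)$. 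Applying Heuristic~\ref{heur:smooth} with $x=L_\DD(1,\tfrac12)$ and $y=L_\DD(\frac12,c_b)$ yields a smoothness probability of $L_\DD(\frac12,\tfrac{1}{4c_b})^{-1}$, and Heuristic~\ref{heur:NbRel+} requires $O(|\Bcal|) = L_\DD(\frac12,c_b)$ relations. Multiplying, the relation-collection phase runs in time $L_\DD(\frac12,\tfrac{1}{4c_b}+c_b)$.

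For the linear algebra, I will cite \cite[Proposition~4.1]{BF14}, which builds on the Storjohann--Labahn HNF algorithm and establishes a cost of $L_\DD(\frac12,(\omega+1)c_b)$ for extracting the class group structure and a candidate regulator from a $K|\Bcal|\times|\Bcal|$ relation matrix. The verification step using the Euler-product approximation of the class number formula adds only polynomial overhead. The final step is to choose $c_b$ to balance the two dominant contributions: solving $\tfrac{1}{4c_b}+c_b=(\omega+1)c_b$ gives $c_b=\tfrac{1}{2\sqrt{\omega}}$, and substituting back into either exponent yields the announced constant $\tfrac{\omega+1}{2\sqrt{\omega}}$.

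The only delicate point is verifying that the polynomial prefactors and the $2^{O(\beta)}$ overhead from BKZ really disappear into $L_\DD(\frac12,c)$ for arbitrarily small $c$; this is what ensures that only the collection and linear-algebra phases matter in the balance, and where the hypothesis $\alpha\le \frac34$ is essential — for larger $\alpha$ the term $(\log\DD)^{2\alpha-1/2}$ would eventually dominate $\log\DD$, spoiling the norm bound on $\bfrak$ and forcing the different parameter choice of the second row of Table~\ref{tab:Param}.
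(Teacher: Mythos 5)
Your proposal is correct and follows essentially the same route as the paper's own analysis in Section~\ref{sec:med}: negligible BKZ and smoothness-test costs, the bound $\Ncal(\bfrak)\le L_\DD\left(1,\frac12\right)$ from Theorem~\ref{thm:BKZ} and Lemma~\ref{lem:DetLat}, the smoothness probability $L_\DD\left(\frac12,\frac{1}{4c_b}\right)^{-1}$, the linear-algebra cost $L_\DD\left(\frac12,(\omega+1)c_b\right)$ from \cite[Proposition~4.1]{BF14}, and the balance giving $c_b=\frac{1}{2\sqrt{\omega}}$. The only quibble is your claim $\log\Ncal(\afrak)=O\bigl((\log\DD)^{1/2}\log\log\DD\bigr)$, which is slightly too strong since $kA$ may be a larger power of $\log\DD$; all that is needed (and all the paper uses) is that $\log\Ncal(\afrak)$ is polynomial in $\log\DD$, so the conclusion is unaffected.
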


\begin{rem}
We recall that $\omega$ denotes the exponent arising in the complexity of matrix multiplication.
The smallest known value is $\omega = 2.3728639$ (see~\cite{Gal14}) which correspond to the value $1.095$ for the second constant. In practice, we use the Strassen algorithm~\cite{Str69} where \mbox{$\omega = \log_2 7 \approx 2.807$}, leading to the second-constant value $1.136$.
\end{rem}

\subsection{The case $\alpha > \frac{3}{4}$} \label{sec:big}

We follow the same path as in the previous case although some adjustments are made. We start by mentioning that our final complexity is much better than the one announced in~\cite{BF14}: we manage to replace the first constant $\frac{2}{3} + \ve$ by $\frac{2\alpha}{3}$, which is always smaller, particularly when~$\alpha$ is close to $\frac{3}{4}$. Furthermore, our second constant can be chosen arbitrarily small, which we denote by $L_\DD \left(\frac{2\alpha}{3}, o(1)\right)$.

This time, $\KK$ belongs to $\Dcal_{n_0,d_0,\alpha,\gamma}$ with $\alpha \in \left(\frac{3}{4},1\right]$. The smoothness bound is fixed to \mbox{$B=\LL_\DD\left(\frac{2\alpha}{3},c_b\right)$}, $c_b > 0$, and the block-size is $\beta = (\log \DD)^{\frac{2\alpha}{3}}$. The bound on the norms $\Ncal(\afrak)$ is polynomial in $L_\DD \left(\frac{2\alpha}{3}\right)$, because of the parameters we used for constructing the ideals $\afrak$. In the same way as in Section~\ref{sec:med}, we show that the runtime of the reduction algorithm is below $L_\DD \left(\frac{2\alpha}{3},c\right)$ for every $c > 0$. The bound we derive for the norm of the new ideal built is
\begin{eqnarray*}
\log\Ncal(\bfrak) &\leq& \frac{1}{2}\log \DD + \frac{\alpha n_0^2}{3} (\log \DD)^{\frac{4\alpha}{3}} (\log \log \DD)^{1-2\alpha} \\
&\leq& \frac{1}{2}\log \DD + c (\log \DD)^{\frac{4\alpha}{3}} (\log \log \DD)^{1-\frac{4\alpha}{3}} \qquad \mbox{for all $c>0$} \\
&\leq& c (\log \DD)^{\frac{4\alpha}{3}} (\log \log \DD)^{1-\frac{4\alpha}{3}} \qquad \mbox{for all $c>0$}.
\end{eqnarray*}

Assuming Heuristic~\ref{heur:smooth} and fixing any $c>0$, if we take $c_b = \sqrt{\frac{2\alpha c}{3}}$ in the definition of $B$, then the probability for ideal $\bfrak$ to be $B$-smooth is \[L_\DD \left(\frac{2\alpha}{3}, c_b\right)^{-1}.\] Hence we conclude that testing $L_\DD\left(\frac{2\alpha}{3}, 2c_b \right)$ ideals suffices for the entire collection phase.
Again, the runtime $L_\DD\left(\frac{\alpha}{3}\right)$ to perform a single smoothness test can be neglected.

\subsection*{Complexity for the class group computation.}
The global complexity of the class group computation follows directly, because the runtime of the linear algebra step is obtained by multiplying the second constant $2c_b$ by a constant factor $\omega+1$.
As the constant $c_b$ could be chosen arbitrarily small (but positive), we get the following theorem.

\begin{thm} \label{thm:big}
Assuming ERH and Heuristics~\ref{heur:smooth} and~\ref{heur:NbRel+}, for every number field~$\KK$ that belongs to $\Dcal_{n_0,d_0,\alpha,\gamma}$ with $\alpha \in \left(\frac{3}{4},1\right]$, our algorithm computes the class group structure and the regulator with runtime \[L_\DD \left( \frac{2\alpha}{3},o(1) \right).\]
\end{thm}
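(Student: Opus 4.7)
The plan is to follow the same four-step template as in Section~\ref{sec:med} (BKZ cost, bound on $\Ncal(\bfrak)$, smoothness probability, balancing with linear algebra), but in the regime $\alpha>3/4$ where the $(\log\DD)^{4\alpha/3}$ term in the bound on $\Ncal(\bfrak)$ overtakes $\tfrac{1}{2}\log\DD$, forcing the larger smoothness bound $B=\LL_\DD(2\alpha/3,c_b)$ and block-size $\beta=(\log\DD)^{2\alpha/3}$ prescribed by Table~\ref{tab:Param}.

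First I would certify that one BKZ$_\beta$-reduction contributes negligibly. By construction of $\afrak$ in Section~\ref{sec:DescSimplerAlgo}, $\log\Ncal(\afrak)$ is polynomial in $\log\DD$, so Theorem~\ref{thm:BKZ} reduces the cost to $2^{O(\beta)}=2^{O((\log\DD)^{2\alpha/3})}$, which fits into $L_\DD(2\alpha/3,c)$ for every $c>0$. Next I would take the explicit bound $\Ncal(\bfrak)\le\beta^{n(n-1)/(2(\beta-1))}\sqrt{\DD}$ already displayed in the excerpt and, using $n\le n_0(\log\DD/\log\log\DD)^\alpha$, expand the exponent: the first term is $\tfrac{1}{2}\log\DD$ and the BKZ contribution is of order $(\log\DD)^{4\alpha/3}(\log\log\DD)^{1-4\alpha/3}$. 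Since $\alpha>3/4$, the latter dominates, and I may absorb any constant into an arbitrarily small $c>0$ to write $\log\Ncal(\bfrak)\le c(\log\DD)^{4\alpha/3}(\log\log\DD)^{1-4\alpha/3}$.

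The main quantitative step is then to apply Heuristic~\ref{heur:smooth} with $u=\log\Ncal(\bfrak)/\log B$. A direct computation gives $u\sim (c/c_b)(\log\DD/\log\log\DD)^{2\alpha/3}$ and $\log u\sim (2\alpha/3)\log\log\DD$, whence the smoothness probability is $L_\DD(2\alpha/3,\,2\alpha c/(3c_b))^{-1}$. Choosing $c_b=\sqrt{2\alpha c/3}$ balances this against the $L_\DD(2\alpha/3,c_b)$ trials dictated by Heuristic~\ref{heur:NbRel+}, so the whole relation collection costs $L_\DD(2\alpha/3,2c_b)$ ideals to test; the per-test cost $L_\DD(\alpha/3)$ from Appendix~\ref{app:smooth} and the reduction cost above are swallowed into the $o(1)$. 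For the linear algebra, HNF via Storjohann--Labahn (as quoted from \cite[Proposition~4.1]{BF14}) multiplies the second constant by $\omega+1$, giving $L_\DD(2\alpha/3,(\omega+1)c_b)$.

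The final move -- and really the only subtle point -- is that both of these second constants are linear in $c_b$, and $c_b=\sqrt{2\alpha c/3}$ can be made arbitrarily small by choosing $c>0$ arbitrarily small in the smoothness bound on $\Ncal(\bfrak)$. Hence the whole runtime collapses to $L_\DD(2\alpha/3,o(1))$, as claimed. The obstacle to watch out for is precisely this compatibility: one has to verify that the constant $c$ that appears inside the bound on $\log\Ncal(\bfrak)$ can legitimately be chosen \emph{after} fixing the factor base parameter $c_b$ without feedback, which works here because the $(\log\log\DD)^{1-4\alpha/3}$ slack strictly beats the extra $\log\DD/2$ coming from the discriminant once $\alpha>3/4$, so the estimate is genuinely uniform in $c$.
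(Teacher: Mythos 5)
Your proposal is correct and follows essentially the same route as the paper: the same parameter choices from Table~\ref{tab:Param}, the same bound $\Ncal(\bfrak)\le L_\DD\left(\frac{4\alpha}{3},c\right)$ for every $c>0$, the same choice $c_b=\sqrt{2\alpha c/3}$ balancing the smoothness probability against the $L_\DD\left(\frac{2\alpha}{3},c_b\right)$ relations required, and the same observation that $c_b$ (hence the linear-algebra constant proportional to $c_b$) can be made arbitrarily small, yielding $L_\DD\left(\frac{2\alpha}{3},o(1)\right)$. The compatibility worry you raise at the end is moot in the paper's order of quantifiers ($c$ is fixed first, then $c_b$), and in any case the bound on $\Ncal(\bfrak)$ does not depend on $c_b$ since the factor $\Ncal(\afrak)$ cancels in $\Ncal(\bfrak)\le\beta^{\frac{n(n-1)}{2(\beta-1)}}\sqrt{\DD}$, so there is no feedback.
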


We can now update Figure~\ref{fig:BF14}, by taking into account the results of Theorems~\ref{thm:med} and~\ref{thm:big}. This is presented in Figure~\ref{fig:LD}.

\begin{figure}[h]
\centering
\begin{tikzpicture}[scale=0.9]
\fill[cyan!50] (0,0) rectangle (12,3);
\fill[cyan!50] (9,0) -- (9,3) -- (12,4) -- (12,0) -- cycle;

\draw[thick, cyan] (6,3) -- (9,3);
\draw[thick, cyan] (9,3) -- (12,4);
\draw[thick, cyan] (12,0) -- (12,4);

\fill[cyan!40] (0,3) -- (4,2) -- (6,3) -- cycle;
\fill[cyan!30] (0,3) -- (4.66,2.33) -- (6,3) -- cycle;
\fill[cyan!20] (0,3) -- (5.33,2.66) -- (6,3) -- cycle;
\draw[dashed, cyan] (0,3) -- (5.33,2.66);
\draw[dashed, cyan] (0,3) -- (4.66,2.33);
\draw[dashed, thick, cyan] (0,3) -- (6,3) -- (4,2) -- (0,3);
\draw[<->, violet] (3.5,3) node[above]{depending on $\gamma$} -- (3.5,2.5);

\node at (3,1.5) [violet]{$L_\DD\left(\max(\alpha,\frac{\gamma}{2})\right)$};
\node at (7.5,1.5) [violet]{$L_\DD\left(\frac{1}{2},\frac{\omega-1}{2\sqrt{\omega}}\right)$};
\node at (10.5,1.5) [violet]{$L_\DD\left(\frac{2\alpha}{3},o(1)\right)$};
\node at (3,0.75) {\cite{GJ16}};

\draw[->] (0,0) -- (0,5); 
\node at (-0.5,5) {$a$};
\foreach \x/\xtext in {0,2/{\frac{1}{3}},3/{\frac{1}{2}},4/{\frac{2}{3}}}
{\draw (0.1cm,\x) -- (-0.1cm,\x) node[left] {$\xtext\strut$};}
\draw[->] (0,0) -- (13,0);
\foreach \x/\xtext in {0,3/{\frac{1}{4}},4/{\frac{1}{3}},6/{\frac{1}{2}},8/{\frac{2}{3}},9/{\frac{3}{4}},12/1}
{\draw (\x,0.1cm) -- (\x,-0.1cm) node[below] {$\xtext\strut$};}
\node at (13,-0.5) {$\alpha$};

\draw[loosely dashed, gray!50] (0,4) -- (12,4);
\draw[loosely dashed, cyan] (9,0) -- (9,3);
\draw[loosely dashed, cyan] (6,0) -- (6,3);
\draw[loosely dashed, cyan] (4,0) -- (4,2);
\end{tikzpicture}
\caption{Complexity obtained by our algorithms.}
\label{fig:LD}
\end{figure}
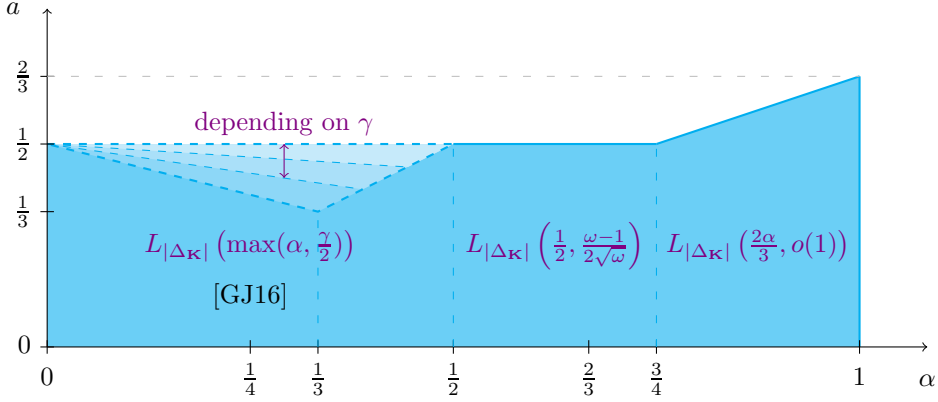

\section{Using HNF to get an even smaller complexity} \label{sec:cheon}

We have a complexity between $L_\DD\left(\frac{1}{2}\right)$ and $L_\DD\left(\frac{2}{3}\right)$, which grows linearly for classes $\Dcal$ with $\alpha \geq \frac{3}{4}$. We want to reduce this worst case using Cheon's trick, that allows to output a shorter vector than in the general case. It relies on the reduction of a sublattice that has smaller dimension than the full lattice, provided that the input lattice has small discriminant. This method seems to be folklore, but his note~\cite{CL15} gives a detailed analysis and we refer to it as \textit{Cheon's trick}.

\begin{lem} \label{lem:Cheon}
Given $(b_1,\dotsc,b_n)$ a basis in HNF of an $n$-dimensional lattice $\Lcal \subset~\RR^n$, we have, for any $1 \leq i < n$, \[\det \left[b_1,\dotsc,b_i\right] \leq \det \left[b_1,\dotsc,b_{i+1}\right].\]
In particular, for any sublattice $\Lcal'$ generated by the $m$ first vectors $b_1,\dotsc,b_m$, we have \[\det \Lcal' \leq \det \Lcal.\]
\end{lem}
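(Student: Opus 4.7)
The plan relies on unpacking the structure of an HNF basis. Writing the basis vectors as rows, being in Hermite Normal Form means each $b_i$ is supported on its first $i$ coordinates, so $b_i = \sum_{j=1}^i b_{i,j}\, e_j$ with $b_{i,i}$ a positive integer and the off-diagonal entries reduced modulo the pivots. My first step would be to observe that the sublattice $\Lcal_m := \langle b_1, \dotsc, b_m \rangle$ lies entirely inside the coordinate subspace $\mathrm{span}(e_1, \dotsc, e_m)$, so its determinant --- defined as $\sqrt{\det(B_m B_m^{\top})}$ where $B_m$ stacks $b_1, \dotsc, b_m$ as rows --- collapses to the absolute value of the determinant of the $m \times m$ lower-triangular block formed by the first $m$ coordinates of $b_1, \dotsc, b_m$. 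By triangularity this is simply
\[
\det \Lcal_m = \prod_{j=1}^m b_{j,j}.
\]

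Once this identity is in hand, both inequalities follow by bookkeeping. For the first one, the ratio $\det[b_1,\dotsc,b_{i+1}]/\det[b_1,\dotsc,b_i]$ equals $b_{i+1,i+1}$, which is a positive integer and therefore at least $1$; this gives $\det[b_1,\dotsc,b_i] \leq \det[b_1,\dotsc,b_{i+1}]$. For the second statement, applying the first inequality iteratively from $i = m$ up to $i = n-1$ yields $\det \Lcal' = \prod_{j=1}^m b_{j,j} \leq \prod_{j=1}^n b_{j,j} = \det \Lcal$, since the missing factors $b_{m+1,m+1}, \dotsc, b_{n,n}$ are all positive integers.

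The only point requiring care is the convention used for the determinant of a sublattice of rank strictly smaller than the ambient dimension: one must interpret it through the Gram matrix, rather than naively as the determinant of a non-square basis matrix. Once this convention is fixed, the triangular shape of the HNF basis makes the computation essentially immediate, and there is no genuine obstacle; the argument is combinatorial/linear-algebraic and requires no input from the arithmetic context in which the lemma will be applied.
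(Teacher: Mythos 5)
Your proof is correct and is exactly the standard argument the paper implicitly relies on (the lemma is stated without proof, deferring to Cheon--Lee): with the HNF basis triangular, the Gram determinant of $b_1,\dotsc,b_m$ reduces to the product $\prod_{j\leq m} b_{j,j}$ of the pivots, and monotonicity follows because each pivot is a positive integer, hence at least $1$. The only point worth making explicit is that this integrality of the pivots is what drives the inequality, so the lemma tacitly concerns lattices defined over $\ZZ$ (as the paper later requires when it computes the HNF), and your chosen convention --- $b_i$ supported on the first $i$ coordinates --- is the one matching the intended application.
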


Remark that both the $n$-th root of the determinant and an exponential factor in $n$ appear in the bound of Theorem~\ref{thm:BKZ}. In most cases, the term with the determinant prevails. However, when the determinant is small, the approximation factor can be larger. The idea behind Cheon's trick is then to reduce a lattice of smaller dimension in order to reduce this approximation factor. We fix the block-size $\beta \leq n$ and look at the output of BKZ performed on the sublattice $\Lcal'$ generated by the $m$ first vectors $b_1,\dotsc,b_m$ of an HNF basis. From Lemma~\ref{lem:Cheon}, we get
\[\|v\| \leq \beta^{\frac{m}{2\beta}} \cdot (\det \Lcal')^{\frac{1}{m}} \leq \beta^{\frac{m}{2\beta}} \cdot (\det \Lcal)^{\frac{1}{m}}.\] \bigskip

The condition we require on the determinant of the lattice is $\det \Lcal \leq \beta^{\frac{n^2}{2\beta}}$: otherwise, for every $m \leq n$, the term $(\det \Lcal)^{\frac{1}{m}}$ is dominating. Assuming that $\det \Lcal \leq \beta^{\frac{n^2}{2\beta}}$, we identify the optimal sub-dimension $m$ in $\{\beta,\dotsc,n\}$ depending on~$\beta$ that minimizes this upper bound: it corresponds to the balance between the two factors, that is $m=\left\lfloor \sqrt{2\beta \log_\beta (\det \Lcal)}\right\rceil$. We fix $m$ to this value and we obtain the following corollary.

\begin{cor} \label{cor:Cheon}
For any integer lattice $\Lcal \subset \RR^n$ of rank $n$ such that $\det \Lcal \leq \beta^{\frac{n^2}{2\beta}}$, using BKZ reduction with block-size $\beta$ along with Cheon's trick permits to output a short vector $v$ that satisfies \[\log_\beta \|v\| \leq \sqrt{\frac{2}{\beta}\log_\beta(\det \Lcal)}\big(1+o(1)\big).\] This algorithm runs in time $\Poly(n,\log \|B_0\|) \cdot \left(\frac{3}{2}\right)^{\beta/2+o(\beta)}$, with $B_0$ the input basis.
\end{cor}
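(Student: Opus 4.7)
The plan is to formalise the argument outlined just before the statement: compute an HNF basis, use Lemma~\ref{lem:Cheon} to bound sublattice determinants, and apply BKZ to a sublattice whose dimension $m$ is tuned to balance the two factors in the BKZ approximation bound. First I would compute an HNF basis $(b_1,\ldots,b_n)$ of $\Lcal$, which takes polynomial time in $n$ and $\log \|B_0\|$ and is absorbed into the final complexity. For each $m \in \{\beta,\ldots,n\}$, write $\Lcal_m$ for the sublattice spanned by $(b_1,\ldots,b_m)$; Lemma~\ref{lem:Cheon} gives $\det \Lcal_m \leq \det \Lcal$, so any short vector of $\Lcal_m$ is automatically a vector of $\Lcal$ of controlled norm.

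Applying Theorem~\ref{thm:BKZ} with block-size $\beta$ to $\Lcal_m$ returns $v \in \Lcal_m \subseteq \Lcal$ with
\[\|v\| \leq \beta^{\frac{m-1}{2(\beta-1)}} (\det \Lcal_m)^{1/m} \leq \beta^{\frac{m}{2\beta}(1+o(1))} (\det \Lcal)^{1/m},\]
using $\frac{m-1}{\beta-1} = \frac{m}{\beta}\bigl(1+O(1/\beta)\bigr)$ as $\beta \to \infty$. Taking $\log_\beta$ and minimising the resulting expression $\frac{m}{2\beta} + \frac{\log_\beta(\det \Lcal)}{m}$ over $m$ by elementary calculus locates the unconstrained optimum at $m^\star = \sqrt{2\beta \log_\beta(\det \Lcal)}$ and substituting back produces the symmetric bound $\sqrt{2\log_\beta(\det \Lcal)/\beta}\,(1+o(1))$ claimed in the statement. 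I would then set $m := \lfloor m^\star \rceil$; the rounding contributes only a further $(1+o(1))$ factor.

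It remains to check that $m$ is admissible and to assess the runtime. The hypothesis $\det \Lcal \leq \beta^{n^2/(2\beta)}$ is precisely equivalent to $m^\star \leq n$, which ensures the upper endpoint of the range is respected; when $m^\star < \beta$ one can simply take $m = \beta$, but this lies outside the regime in which Cheon's trick improves over a direct application of Theorem~\ref{thm:BKZ}, and a plain full-lattice reduction already delivers the target bound. The runtime claim then follows directly from Theorem~\ref{thm:BKZ}, since BKZ with block-size $\beta$ on a lattice of rank $m \leq n$ runs in $\Poly(n,\log \|B_0\|) \cdot (3/2)^{\beta/2+o(\beta)}$ and the HNF precomputation is polynomial. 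The only mildly delicate part of the argument is the bookkeeping of the $(1+o(1))$ factors arising from the approximation $\frac{m-1}{\beta-1} \sim \frac{m}{\beta}$ and from rounding $m^\star$ to an integer; both enter multiplicatively in the exponent $\log_\beta \|v\|$ and do not affect the leading-order estimate.
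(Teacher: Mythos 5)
Your argument is correct and follows essentially the same route as the paper: HNF basis, Lemma~\ref{lem:Cheon} to bound the sublattice determinant, BKZ on the $m$-dimensional sublattice with $m=\left\lfloor\sqrt{2\beta\log_\beta(\det\Lcal)}\right\rceil$ obtained by balancing the two factors, the determinant hypothesis guaranteeing $m\leq n$, and the runtime inherited from Theorem~\ref{thm:BKZ}. Your explicit tracking of the exponent $\frac{m-1}{2(\beta-1)}=\frac{m}{2\beta}\bigl(1+O(1/\beta)\bigr)$ is in fact slightly more careful than the paper, which silently replaces it by $\frac{m}{2\beta}$ and attributes the $(1+o(1))$ only to the rounding of $m$.
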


\begin{proof}
We consider the sublattice of dimension $m$, for $m$ as defined above. The condition on the determinant of $\Lcal$ ensures that our value of $m$ is effectively lower than $n$. Then, by Theorem~\ref{thm:BKZ} and Lemma~\ref{lem:Cheon}, we have
\[\|v\| \leq \beta^{\frac{m}{2\beta}} \cdot (\det \Lcal)^{\frac{1}{m}} = \beta^{\sqrt{(2/\beta)\log_\beta(\det \Lcal)}\big(1+o(1)\big)},\]
which yields the announced result --- the $\left(1+o(1)\right)$ factor appears because of the integer approximation of $m$.
\end{proof}

As shown in this section, the complexity we are able to reach with this method is $L_\DD\left(\frac{2\alpha+1}{5}\right)$. It varies linearly between $L_\DD\left(\frac{1}{2}\right)$ and $L_\DD\left(\frac{3}{5}\right) < L_\DD\left(\frac{2}{3}\right)$. Hence, we fix the smoothness bound $B=\LL_\DD\left(\frac{2\alpha+1}{5},c_b\right)$, with $c_b >0$ to be determined. Also, the block-size used for BKZ-reductions is set to $\beta=\left(\log \DD\right)^{\frac{2\alpha+1}{5}}$. Overall, the path followed by this improved version of our algorithm is essentially similar to the one described in Section~\ref{sec:DescSimplerAlgo}. We only mention the adjustments in the reminder of the section.

First, we need to work with an integral lattice. Indeed as we begin by computing the HNF of the lattice, it must be defined over $\ZZ$. This is not a problem, as we already mentioned. We know that the required precision is polynomial in the size of the entries. Practically, we approximate the Gram matrix and use the implementation of~\cite{EJ18}. We also mention the special case of totally real number fields where no approximation are required as the Gram matrix is integral.

Second, to ensure that the hypothesis of Corollary~\ref{cor:Cheon} is satisfied, we need a bound on the determinant of the input lattice. As we want a lattice with \emph{small determinant} as input, we first perform a rough reduction, using the classical BKZ algorithm --- that is without Cheon's trick. Given an ideal $\afrak$ constructed as above as a power-product of elements in the factor base and denoting by $v$ the first vector of the BKZ-reduced basis, we define the ideal $\bfrak$ as the unique integral ideal that satisfies \[\gen{x_v} = \afrak \bfrak.\]

Thanks to the analysis presented in Section~\ref{sec:big}, we know that the norm of this ideal $\bfrak$ is upper bounded by $L_\DD\left(\frac{8\alpha-1}{5}\right)$. We are in the case $\alpha > \frac{3}{4}$, so that $\frac{8\alpha-1}{5} > 1$. The determinant of the lattice corresponding to the canonical embedding of $\bfrak$ is $\Ncal(\bfrak) \cdot \sqrt{\DD}$. Hence we cannot expect that this quantity is smaller than $L_\DD(1)$, so we look for an ideal $\bfrak$ that is $\widetilde{B}$-smooth for $\widetilde{B} = \LL_\DD(1,1)$. According to Proposition~\ref{prop:CostECM}, each smoothness test costs \[L_{L_\DD(1)}\left(\frac{1}{2}\right) = L_\DD\left(\frac{1}{2}\right)\] and assuming Heuristic~\ref{heur:smooth}, testing about $L_\DD\left(\frac{8\alpha-6}{5}\right)$ ideals suffices on average. In addition, the number of ideals in every smooth decomposition is upper bounded by $\left(\log \DD\right)^{\frac{8\alpha-6}{5}}\big(1+o(1)\big)$. The complete runtime of this smoothness phase is in $L_\DD\left(\frac{1}{2}\right)$, as $0 < \frac{8\alpha-6}{5} < \frac{2}{5}$, which is outweighed by the initial BKZ$_\beta$ reduction, whose cost is $L_\DD\left(\frac{2\alpha+1}{5},o(1)\right)$.

In the end, we have ideals $\bfrak_1,\dotsc,\bfrak_l$ whose canonical embeddings have determinant in $\LL_\DD(1,1) < \beta^{\frac{n^2}{2\beta}} = L_\DD\left(\frac{8\alpha-1}{5}\right)$ and which satisfy $\prod \bfrak_i = \bfrak$. We notice that for the application of Corollary~\ref{cor:Cheon}, the lower bound $\LL_\DD(1,1)$ does not have to be reached. However, as the quality of the output relies on this quantity --- a factor $\log_\beta \det \Lcal$ appears in the exponent --- we minimize it in order to get the best possible output.

For each ideal lattice $\sigma(\bfrak_i)$, we may apply Cheon's trick combined with BKZ-reduction. As for all $i$ it is the case that $\log \left(\det \sigma(\bfrak_i)\right) = C \log \DD$ for a $C>0$, a small vector $v_i$ is found with norm satisfying
\begin{align*}
\log \|v_i\| \;
&\leq \; \left(\frac{2C \log \DD}{(\log \DD)^{\frac{2\alpha+1}{5}}\log\left((\log \DD)^{\frac{2\alpha+1}{5}}\right)}\right)^{\frac{1}{2}}\!\!\!\log\!\left(\!(\log \DD)^{\frac{2\alpha+1}{5}}\!\right)\big(1+o(1)\big) \\
&\leq \;\; \sqrt{\frac{2C(2\alpha+1)}{5}}  \left(\log \DD\right)^{\frac{2-\alpha}{5}}\left(\log \log \DD\right)^{\frac{1}{2}} \big(1+o(1)\big) \\
&\leq \;\; c \left(\log \DD\right)^{\frac{2-\alpha}{5}}\left(\log \log \DD\right)^{1-\frac{2-\alpha}{5}} \qquad \text{for every constant $c>0$.} 
\end{align*}

As we did for the earlier analyses, we bound the norm of the algebraic integer $x_{v_i}$ associated to the vector $v_i$. We obtain the inequality $\Ncal\left(\gen{x_{v_i}}\right) \leq L_\DD\left(\frac{4\alpha+2}{5},c\right)$ for every $c>0$. In addition, there exist integral ideals~$\cfrak_i$ such that $\gen{x_{v_i}} = \bfrak_i \cfrak_i$ for all $i$. As the norm of $\bfrak_i$ is less than $\LL_\DD(1,1)$, we deduce that for each $i$, the norm of the ideal~$\cfrak_i$ satisfies \[\Ncal\left(\cfrak_i\right) \quad \leq \quad L_\DD\left(\frac{4\alpha+2}{5}, o(1)\right).\]

Denoting by $c$ the arbitrarily small non-negative constant that arises in the $o(1)$, we follow the same argument as in Section~\ref{sec:big}. By fixing $c_b = \sqrt{\frac{(2\alpha+1)c}{5}}$, we deduce that the probability for each $\cfrak_i$ to be $B$-smooth is \[L_\DD\left(\frac{2\alpha+1}{5}, c_b\right)^{-1}.\]

Hence we conclude that testing $L_\DD\left(\frac{2\alpha+1}{5}, 2c_b\right)$ ideals suffices to complete the relation collection. Indeed, we have to test $L_\DD\left(\frac{2\alpha+1}{5}, c_b\right)$ ideals for each ideal~$\bfrak_i$ and given an ideal $\bfrak$ as input, the number of factors $\bfrak_i$ is polynomial. Finally, assuming Heuristic~\ref{heur:NbRel+}, we require $L_\DD\left(\frac{2\alpha+1}{5}, c_b\right)$ relations, which leads to the runtime stated above for the relation collection.

\subsection*{Complexity for the class group computation.}
Again, as in Section~\ref{sec:big}, the final complexity for the class group computation follows directly and we get the following theorem.

\bigskip

\begin{thm} \label{thm:big+cheon}
Assuming ERH and Heuristics~\ref{heur:smooth} and~\ref{heur:NbRel+}, for every number field~$\KK$ that belongs to $\Dcal_{n_0,d_0,\alpha,\gamma}$ with $\alpha \in \left(\frac{3}{4},1\right]$, our algorithm computes the class group structure and the regulator with runtime \[L_\DD\left(\frac{2\alpha+1}{5},o(1)\right).\]
\end{thm}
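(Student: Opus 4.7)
The plan is to assemble the proof directly from the algorithmic description sketched in the preceding discussion of Section~\ref{sec:cheon}, verifying that each phase fits within the target runtime $L_\DD\!\left(\frac{2\alpha+1}{5},o(1)\right)$. First I would fix the parameters $B=\LL_\DD\!\left(\frac{2\alpha+1}{5},c_b\right)$ and $\beta=(\log\DD)^{\frac{2\alpha+1}{5}}$, with $c_b>0$ to be pinned down by the balancing argument, and set up the factor base $\Bcal$ accordingly. The input ideals $\afrak$ are drawn as random power-products of factor base elements, exactly as in Section~\ref{sec:DescSimplerAlgo}, so that $\Ncal(\afrak)$ is polynomial in $L_\DD\!\left(\frac{2\alpha+1}{5}\right)$.

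Next I would carry out the two-stage reduction. The first stage is a plain BKZ$_\beta$ reduction (no Cheon's trick) applied to $\sigma(\afrak)$; by the same computation as in Section~\ref{sec:big}, the associated integer $x_v$ yields an ideal $\bfrak$ with $\gen{x_v}=\afrak\bfrak$ and $\Ncal(\bfrak)\leq L_\DD\!\left(\frac{8\alpha-1}{5}\right)$. I would then test $\bfrak$ for smoothness with the larger bound $\widetilde{B}=\LL_\DD(1,1)$; Heuristic~\ref{heur:smooth} together with Proposition~\ref{prop:CostECM} gives that this auxiliary smoothness phase costs $L_\DD\!\left(\frac{1}{2}\right)$, negligible compared to the BKZ cost. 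The output is a decomposition $\bfrak=\prod \bfrak_i$ with $\Ncal(\bfrak_i)\leq \widetilde{B}$, and correspondingly $\det\sigma(\bfrak_i)\leq \LL_\DD(1,1)\sqrt{\DD}$. This is crucial because $\LL_\DD(1,1)\leq \beta^{n^2/(2\beta)}=L_\DD\!\left(\frac{8\alpha-1}{5}\right)$ for $\alpha>\frac{3}{4}$, so the hypothesis of Corollary~\ref{cor:Cheon} is satisfied for each $\bfrak_i$.

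The second reduction stage then applies Corollary~\ref{cor:Cheon} (BKZ$_\beta$ combined with Cheon's trick) to each $\sigma(\bfrak_i)$. Substituting $\det\sigma(\bfrak_i)=\Theta(\log \DD)$ on a logarithmic scale into the corollary's bound yields a short vector $v_i$ with $\log\|v_i\|\leq c\,(\log\DD)^{\frac{2-\alpha}{5}}(\log\log\DD)^{1-\frac{2-\alpha}{5}}$ for every $c>0$, and therefore an ideal $\cfrak_i$ defined by $\gen{x_{v_i}}=\bfrak_i\cfrak_i$ with $\Ncal(\cfrak_i)\leq L_\DD\!\left(\frac{4\alpha+2}{5},o(1)\right)$. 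Fixing $c_b=\sqrt{(2\alpha+1)c/5}$ in the definition of $B$, Heuristic~\ref{heur:smooth} yields a $B$-smoothness probability of $L_\DD\!\left(\frac{2\alpha+1}{5},c_b\right)^{-1}$ for each $\cfrak_i$. Since a single input $\afrak$ produces only polynomially many factors $\bfrak_i$, and Heuristic~\ref{heur:NbRel+} requires $L_\DD\!\left(\frac{2\alpha+1}{5},c_b\right)$ relations, the total number of smoothness tests is $L_\DD\!\left(\frac{2\alpha+1}{5},2c_b\right)$, and each test costs $L_\DD\!\left(\frac{\alpha}{3}\right)$ by Proposition~\ref{prop:CostECM}, hence negligible.

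Finally, I would import the linear algebra and regulator analyses of~\cite{BF14} verbatim: the HNF/SNF computation on the $K|\Bcal|\times|\Bcal|$ relation matrix runs in $L_\DD\!\left(\frac{2\alpha+1}{5},(\omega+1)\cdot 2c_b\right)$. Since $c_b$ may be chosen arbitrarily small (taking $c\to 0^+$), the second constant collapses to $o(1)$ and the overall runtime is $L_\DD\!\left(\frac{2\alpha+1}{5},o(1)\right)$, as claimed. The main delicate point I expect is verifying that Cheon's trick is legitimately applicable after the first BKZ stage, which hinges on the inequality $\LL_\DD(1,1)\leq \beta^{n^2/(2\beta)}$ for the chosen $\beta$; once this is checked, everything else is an exercise in collecting the estimates already laid out in Sections~\ref{sec:DescSimplerAlgo} and~\ref{sec:big}.
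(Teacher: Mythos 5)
Your proposal is correct and follows essentially the same route as the paper's own argument: the two-stage reduction (plain BKZ$_\beta$ first, then auxiliary $\widetilde{B}=\LL_\DD(1,1)$-smoothness to obtain factors $\bfrak_i$ of small determinant, then Cheon's trick via Corollary~\ref{cor:Cheon} on each $\sigma(\bfrak_i)$), the resulting bound $\Ncal(\cfrak_i)\leq L_\DD\!\left(\frac{4\alpha+2}{5},o(1)\right)$, the choice $c_b=\sqrt{(2\alpha+1)c/5}$, and letting $c\to 0^+$ to absorb the linear-algebra constant into $o(1)$, including the key verification that $\LL_\DD(1,1)<\beta^{n^2/(2\beta)}=L_\DD\!\left(\frac{8\alpha-1}{5}\right)$ so that Cheon's trick applies. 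The only trivial slip is quoting the per-test smoothness cost as $L_\DD\!\left(\frac{\alpha}{3}\right)$ (the value from Section~\ref{sec:big}) rather than $L_B\!\left(\frac{1}{2}\right)=L_\DD\!\left(\frac{2\alpha+1}{10}\right)$ for the present $B$, which is immaterial since both are negligible compared with $L_\DD\!\left(\frac{2\alpha+1}{5}\right)$.
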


This new result allows to reduce the slope of the increasing line appearing in our complexity figures. The worst complexity now becomes $L_\DD\left(\frac{3}{5},o(1)\right)$. This result is displayed in Figure~\ref{fig:LD+cheon}.

\begin{figure}[h]
\centering
\begin{tikzpicture}[scale=0.9]
\fill[cyan!50] (0,0) rectangle (12,3);
\fill[cyan!50] (9,0) -- (9,3) -- (12,3.6) -- (12,0) -- cycle;

\draw[thick, cyan] (6,3) -- (9,3);
\draw[thick, cyan] (9,3) -- (12,3.6);
\draw[thick, cyan] (12,0) -- (12,3.6);

\fill[cyan!40] (0,3) -- (4,2) -- (6,3) -- cycle;
\fill[cyan!30] (0,3) -- (4.66,2.33) -- (6,3) -- cycle;
\fill[cyan!20] (0,3) -- (5.33,2.66) -- (6,3) -- cycle;
\draw[dashed, cyan] (0,3) -- (5.33,2.66);
\draw[dashed, cyan] (0,3) -- (4.66,2.33);
\draw[dashed, thick, cyan] (0,3) -- (6,3) -- (4,2) -- (0,3);
\draw[<->, violet] (3.5,3) node[above]{depending on $\gamma$} -- (3.5,2.5);

\node at (3,1.5) [violet]{$L_\DD\left(\max(\alpha,\frac{\gamma}{2})\right)$};
\node at (7.5,1.5) [violet]{$L_\DD\left(\frac{1}{2},\frac{\omega-1}{2\sqrt{\omega}}\right)$};
\node at (10.5,1.5) [violet]{$L_\DD\left(\frac{2\alpha+1}{5},o(1)\right)$};
\node at (3,0.75) {\cite{GJ16}};

\draw[->] (0,0) -- (0,5); 
\node at (-0.5,5) {$a$};
\foreach \x/\xtext in {0,2/{\frac{1}{3}},3/{\frac{1}{2}},3.6/{\frac{3}{5}}}
{\draw (0.1cm,\x) -- (-0.1cm,\x) node[left] {$\xtext\strut$};}
\draw[->] (0,0) -- (13,0);
\foreach \x/\xtext in {0,3/{\frac{1}{4}},4/{\frac{1}{3}},6/{\frac{1}{2}},8/{\frac{2}{3}},9/{\frac{3}{4}},12/1}
{\draw (\x,0.1cm) -- (\x,-0.1cm) node[below] {$\xtext\strut$};}
\node at (13,-0.5) {$\alpha$};

\draw[loosely dashed, gray!50] (0,3.6) -- (12,3.6);
\draw[loosely dashed, cyan] (9,0) -- (9,3);
\draw[loosely dashed, cyan] (6,0) -- (6,3);
\draw[loosely dashed, cyan] (4,0) -- (4,2);
\end{tikzpicture}
\caption{Complexity obtained by our algorithms and Cheon's trick.}
\label{fig:LD+cheon}
\end{figure}
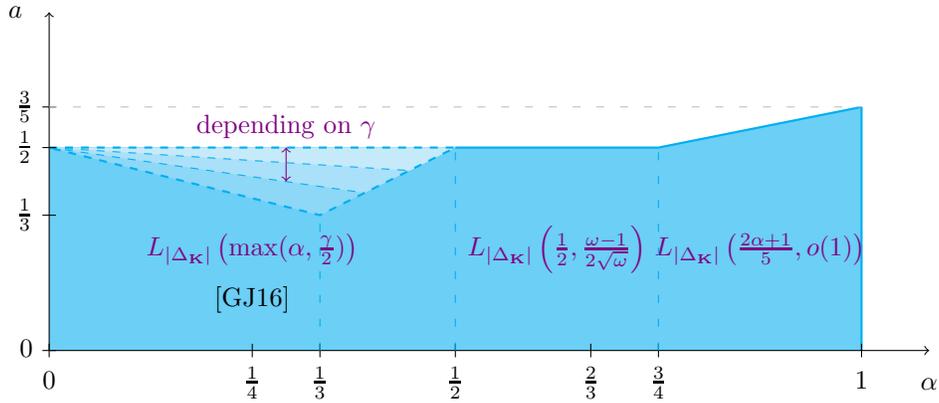

\bibliographystyle{amsalpha}
\bibliography{Biblio}

\appendix

\section{Smoothness properties} \label{app:smooth}

\subsection{Smooth integers}

The smoothness of an integer is another way to evaluate its size which depends on its prime factors.

\begin{defin}
For an integer $B \in \NN$, we say that an integer is $B$-smooth if all its prime factors are below $B$. The bound $B$ is then often called a \emph{smoothness bound}.
\end{defin}

\subsection*{Smoothness probability.}
Let us denote by $\Pcal(x,y)$ the probability that an integer~$x$ is $y$-smooth, that means all prime factors of $x$ are less than or equal to $y$. Dickman was the first one to address the question of asymptotic formulae in~\cite{Dic30}. Before stating his result, we introduce the Dickman \emph{rho}-function, defined over $\RR^+$ as the unique continuous function that satisfies $u\rho'(u) + \rho(u-1) = 0$ with initial condition $\rho(u)=1$ for $u \in [0,1]$.

\begin{prop}
For any fixed $u > 0$, we have \[\lim_{x \to \infty} \Pcal \left(x,x^{1/u}\right) = \rho (u).\]
\end{prop}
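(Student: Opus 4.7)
The plan is to establish the limit by induction on $\lfloor u \rfloor$, using a Buchstab-type identity relating smooth-number counts at different parameters. Writing $\Psi(x,y)$ for the number of $y$-smooth integers at most $x$, so that $\Pcal(x,y) = \Psi(x,y)/x$, the base case $u \in (0,1]$ is immediate: since $x^{1/u} \geq x$, every integer $n \leq x$ is trivially $x^{1/u}$-smooth, so $\Pcal(x, x^{1/u}) = 1 = \rho(u)$.

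For the inductive step, I would use the fact that every non-$y$-smooth integer $n \leq x$ has a unique largest prime factor $p > y$, and writing $n = pm$ gives an integer $m \leq x/p$ that is $p$-smooth. This bijection yields
\[
\lfloor x \rfloor - \Psi(x,y) \;=\; \sum_{y < p \leq x} \Psi(x/p, p).
\]
Dividing by $x$ and rewriting in the form
\[
1 - \frac{\Psi(x,y)}{x} \;=\; \sum_{y < p \leq x} \frac{1}{p}\cdot\frac{\Psi(x/p, p)}{x/p},
\]
I would then apply the prime number theorem to replace the sum over primes by an integral: substituting $p = y^s$, so that $\log(x/p)/\log p = u/s - 1$, the density $dp/(p \log p)$ becomes $ds/s$. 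For every $s \in (1, u]$ the exponent $u/s - 1$ lies in $(0, u-1]$, strictly below $u$, so the inductive hypothesis yields $\Psi(x/p, p)/(x/p) \to \rho(u/s - 1)$ pointwise.

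Passing to the limit by dominated convergence (using $0 \leq \rho \leq 1$), I obtain
\[
\lim_{x \to \infty} \Pcal(x, x^{1/u}) \;=\; 1 - \int_1^u \rho\!\left(\tfrac{u}{s}-1\right) \frac{ds}{s} \;=\; 1 - \int_1^u \rho(v-1)\,\frac{dv}{v},
\]
after the substitution $v = u/s$. Differentiating in $u$ shows the right-hand side satisfies $u f'(u) + f(u-1) = 0$ with $f(1) = 1$, which is exactly the defining differential-difference equation of $\rho$; by uniqueness of the continuous solution the limit equals $\rho(u)$.

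The main technical obstacle is justifying the discrete-to-continuous passage uniformly enough to swap limit and integral. Near the endpoint $s = 1$ (i.e.\ primes just above $y$), the inductive limit becomes $\rho(u - 1)$ where the convergence is not uniform in $y$; near $s = u$ (primes near $x$), the summands $\Psi(x/p,p)/(x/p)$ are evaluated at shrinking arguments. Both issues are handled by isolating $O(\varepsilon)$-neighborhoods of the endpoints, bounding those contributions trivially by $\Psi(x/p,p) \leq x/p$ together with Mertens' estimate $\sum_{y < p \leq y^{1+\varepsilon}} 1/p = O(\varepsilon)$, and invoking the uniform continuity of $\rho$ on compact subsets of $(0, u)$. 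Letting $\varepsilon \to 0$ after $x \to \infty$ completes the argument.
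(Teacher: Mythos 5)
Your argument is correct in substance, but note that the paper does not actually prove this proposition: its ``proof'' is a citation of Dickman's original paper and of the Hildebrand--Tenenbaum survey, where the statement is classical. What you have reconstructed is essentially the standard textbook proof (Hildebrand's induction): the exact identity $\lfloor x\rfloor-\Psi(x,y)=\sum_{y<p\le x}\Psi(x/p,p)$ obtained by classifying the non-$y$-smooth $n\le x$ according to their largest prime factor, a Mertens/partial-summation passage from the prime sum to $\int_1^u\rho(u/s-1)\,\frac{ds}{s}$, and identification of the limit via the integrated form $\rho(u)=1-\int_1^u\rho(v-1)\,\frac{dv}{v}$ of the delay differential equation, with the base case $u\le 1$ trivial. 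Compared with the paper, your route buys a self-contained elementary proof at the cost of the bookkeeping below; the paper's citation buys, in addition, the quantitative refinement $\rho(u)=u^{-u(1+o(1))}$ that it actually uses later, which your qualitative limit does not give.

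Two points in your sketch deserve tightening. First, for $s\in(1,u]$ the exponent $u/s-1$ ranges over $[0,u-1)$, not $(0,u-1]$; this is harmless (near $s=u$ one has $x/p\le p$, so every $m\le x/p$ is $p$-smooth and the summand is exactly $1+O(p/x)$, matching $\rho=1$), but the endpoint you actually need to truncate is $s$ near $1$, which your Mertens bound handles. Second, uniform continuity of $\rho$ by itself does not justify the limit--sum interchange on the interior range: you need the convergence $\Psi(x',x'^{1/v})/x'\to\rho(v)$ to be uniform in $v$ on compact sets, not merely pointwise, since the parameter $v=u/s-1$ varies with $p$. This is the one step of your write-up that does not yet close as stated, but it is readily supplied: either strengthen the induction hypothesis to uniform convergence on compacta (as in Hildebrand--Tenenbaum, where explicit error terms are carried), or observe that $v\mapsto\Pcal(x,x^{1/v})$ is non-increasing and the limit $\rho$ is continuous, so pointwise convergence of these monotone functions is automatically uniform on compact intervals. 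With that patch, your proof is complete.
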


\begin{proof}
This result appears in the work of Dickman~\cite{Dic30} and in the survey written later by Hildebrand and Tenenbaum~\cite{HT93}. The latter also showed~\cite[Corollary~1.3]{HT93} that when $u$ is large enough, $\rho(u)$ may be approximated by $u^{-u(1+o(1))}$.
\end{proof}

The main drawback of that previous result is that $u$ has to be fixed: it cannot depend on $x$. This issue is covered by the stronger result of Canfield, Erd{\H{o}}s, and Pomerance in~\cite{CEP83}:

\begin{thm} \label{thm:CEP}
For every $\ve>0$, there exists a constant $C_\ve$ such that for all $x \geq 1$ and $3 \leq u \leq (1-\ve)\frac{\log x}{\log \log x}$, we have
\[\Pcal(x,x^{1/u}) \geq e^{-u\left(\log u + \log \log u -1 + \frac{\log \log u -1}{\log u} + E(x,u)\right)},\]
where \[\left|E(x,u)\right| \leq C_\ve \left(\frac{\log \log u}{\log u}\right)^2.\]
\end{thm}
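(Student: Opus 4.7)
The strategy is to reduce this statement to two well-studied ingredients: a sharp asymptotic expansion of Dickman's $\rho$-function, and a uniform comparison between $\Pcal(x, x^{1/u})$ and $\rho(u)$ valid throughout the announced range of $u$. Writing $\Psi(x,y)$ for the count of $y$-smooth integers in $[1,x]$, so that $\Pcal(x,y) = \Psi(x,y)/x$, the problem reduces to combining sharp asymptotics for $\Psi$ on the one hand and for $\rho$ on the other.

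First I would establish the uniform asymptotic
\[
\frac{\Psi(x, x^{1/u})}{x} \;=\; \rho(u)\bigl(1 + o(1)\bigr)
\]
throughout $3 \leq u \leq (1-\ve)\log x/\log\log x$, following the de~Bruijn--Hildebrand framework. The constraint $u \leq (1-\ve)\log x/\log\log x$ keeps us inside the Hildebrand range where the relative error is at most $O((\log(u+1))/\log y)$, which, after moving to the exponential, contributes a term absorbed by the error budget $C_\ve(\log\log u/\log u)^2$ we are allowed.

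Second, I would derive the saddle-point expansion of $\log \rho(u)$. Using the Laplace-transform representation of $\rho$ and the saddle point $\xi(u)$ defined implicitly by $e^{\xi(u)} - 1 = u\,\xi(u)$, the steepest-descent method yields a formula of the shape $\log\rho(u) = \gamma - u\,\xi(u) + \int_{0}^{\xi(u)}(e^t-1)/t\,dt + O(\log u)$, where $\gamma$ denotes the Euler--Mascheroni constant. Expanding $\xi(u) = \log u + \log\log u + O(\log\log u/\log u)$ one order beyond the classical approximation and plugging it in, the leading contribution is $-u(\log u + \log\log u - 1)$, the next correction is exactly $-u(\log\log u - 1)/\log u$, and the remaining terms are bounded by $u \cdot O((\log\log u/\log u)^2)$.

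Combining the two steps and reading off a one-sided bound (only a lower bound on $\Pcal$ is required) gives the claimed inequality. The main obstacle is the saddle-point analysis of Step~2: isolating the third explicit term $(\log\log u - 1)/\log u$ requires pushing the asymptotic expansion of $\xi(u)$ one order beyond the standard $\log(u\log u)$ and carefully matching the error contributions coming from the Gaussian correction, from $u\,\xi(u)$, and from the integral term, so that all secondary pieces collapse into the prescribed $(\log\log u/\log u)^2$ remainder rather than polluting one of the explicit terms.
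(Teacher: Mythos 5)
First, note that the paper does not prove this statement at all: it is quoted verbatim from Canfield--Erd\H{o}s--Pomerance \cite{CEP83}, so there is no internal proof to compare against; your proposal must stand on its own. Your Step~2 is fine --- the saddle-point expansion of $\log\rho(u)$ with $\xi(u)=\log u+\log\log u+O(\log\log u/\log u)$ does yield $\log\rho(u)=-u\bigl(\log u+\log\log u-1+\frac{\log\log u-1}{\log u}+O((\log\log u/\log u)^2)\bigr)$, and inside any range where $\Psi(x,x^{1/u})=x\rho(u)(1+o(1))$ holds, the multiplicative $1+o(1)$ perturbs the exponent by far less than the allowed $u\,C_\ve(\log\log u/\log u)^2$.

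The genuine gap is in Step~1. The constraint $u\le(1-\ve)\frac{\log x}{\log\log x}$ does \emph{not} keep you inside Hildebrand's range: Hildebrand's theorem (with relative error $O(\log(u+1)/\log y)$) requires $y\ge\exp\bigl((\log\log x)^{5/3+\ve}\bigr)$, i.e.\ $u\le\log x/(\log\log x)^{5/3+\ve}$, whereas at the top of the range in the statement one only has $\log y=\log x/u\asymp\log\log x$, i.e.\ $y$ of order $(\log x)^{1+\ve'}$. In that regime the asymptotic $\Psi(x,y)\sim x\rho(u)$ is not a theorem: Hildebrand also showed its validity for $y\ge(\log x)^{2+\ve}$ is equivalent to the Riemann Hypothesis, and below $(\log x)^{2}$ it fails outright. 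Monotonicity in $y$ goes the wrong way, so you cannot transfer the bound from Hildebrand's range either. Thus your argument proves the inequality only in a strictly smaller $u$-range than claimed, and the missing regime is precisely the one that makes the result of \cite{CEP83} valuable for the smoothness heuristics used in this paper. (Chronologically this is unsurprising: Hildebrand's theorem postdates \cite{CEP83}.) To close the gap you must bound $\Psi(x,y)$ from below directly rather than through $\rho$: the original proof is an elementary counting argument --- lower-bounding the number of $y$-smooth integers up to $x$ by the number of products of about $u$ primes below $y$, using $\pi(y)\sim y/\log y$ and Stirling-type estimates to extract the secondary terms $\log\log u-1$ and $\frac{\log\log u-1}{\log u}$ --- or, alternatively, one can run a saddle-point analysis of $\Psi(x,y)$ itself (\emph{à la} Hildebrand--Tenenbaum), which is uniform down to $y=(\log x)^{1+\ve}$; either replaces your appeal to Hildebrand's asymptotic.
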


Eventually, we can express $\Pcal(x,y)$ by fixing $u$ such that $u = \frac{\log x}{\log y}$ and substitute in the last expression. We obtain \[\Pcal(x,y) = u^{-u(1+o(1))},\] which we already have from Dickman's work.

\begin{cor}
Assuming that $x= \LL_N(\alpha_1,c_1)$, $y= \LL_N(\alpha_2,c_2)$, and $\alpha_1 > \alpha_2$, Theorem~\ref{thm:CEP} can be expressed as
\[\Pcal(x,y) \geq L_N\left(\alpha_1-\alpha_2,(\alpha_1-\alpha_2)\frac{c_1}{c_2}\right)^{-1}.\]
\end{cor}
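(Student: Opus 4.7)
The plan is to unpack both $x$ and $y$ in $\LL$-notation, compute the ratio $u=\log x/\log y$ explicitly, and then invoke the bound $\Pcal(x,y)\geq u^{-u(1+o(1))}$ obtained from Theorem~\ref{thm:CEP} (equivalently from Dickman's estimate as noted just before the corollary). The entire argument is therefore a direct calculation in $L_N$-notation, with the only care needed being the management of the $o(1)$ term and verification that we lie in the regime where the CEP bound applies.

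First I would write
\[
\log x = c_1(\log N)^{\alpha_1}(\log\log N)^{1-\alpha_1},\qquad \log y = c_2(\log N)^{\alpha_2}(\log\log N)^{1-\alpha_2},
\]
which immediately yields
\[
u \;=\; \frac{\log x}{\log y} \;=\; \frac{c_1}{c_2}\,(\log N)^{\alpha_1-\alpha_2}(\log\log N)^{\alpha_2-\alpha_1}.
\]
Taking logarithms and using $\alpha_1>\alpha_2$ gives
\[
\log u \;=\; (\alpha_1-\alpha_2)\log\log N \;-\; (\alpha_1-\alpha_2)\log\log\log N \;+\; \log(c_1/c_2),
\]
so that $\log u = (\alpha_1-\alpha_2)(\log\log N)(1+o(1))$. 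Before continuing I would check the hypothesis of Theorem~\ref{thm:CEP}: since $u$ is a product of $(\log N)^{\alpha_1-\alpha_2}$ and lower-order terms, and $\log x/\log\log x$ grows like $(\log N)^{\alpha_1}/\log\log N$ up to constants, the condition $u\leq (1-\varepsilon)\log x/\log\log x$ holds for $N$ large. This is the only verification step with any subtlety.

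Next I would form the product $u\log u$, which controls the exponent. Multiplying the two expressions,
\[
u\log u \;=\; \frac{c_1}{c_2}(\log N)^{\alpha_1-\alpha_2}(\log\log N)^{\alpha_2-\alpha_1}\cdot (\alpha_1-\alpha_2)(\log\log N)(1+o(1)),
\]
which simplifies to
\[
u\log u \;=\; (\alpha_1-\alpha_2)\frac{c_1}{c_2}\,(\log N)^{\alpha_1-\alpha_2}(\log\log N)^{1-(\alpha_1-\alpha_2)}(1+o(1)).
\]
At this point the right-hand side has exactly the shape $\bigl(c+o(1)\bigr)(\log N)^{\alpha}(\log\log N)^{1-\alpha}$ that defines the $L_N(\alpha,c)$ notation, with $\alpha=\alpha_1-\alpha_2$ and $c=(\alpha_1-\alpha_2)c_1/c_2$.

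Finally I would conclude by feeding this into the CEP bound. Combining Theorem~\ref{thm:CEP} with the standard consequence $\Pcal(x,y)\geq \exp\bigl(-u\log u(1+o(1))\bigr)$ (the lower-order terms $\log\log u$, $E(x,u)$, etc.\ are all absorbed in the $(1+o(1))$ factor multiplying $\log u$, since they are of size $O(\log\log\log N)$, much smaller than $\log u$), one obtains
\[
\Pcal(x,y) \;\geq\; \exp\!\bigl(-u\log u(1+o(1))\bigr) \;=\; L_N\!\left(\alpha_1-\alpha_2,\;(\alpha_1-\alpha_2)\frac{c_1}{c_2}\right)^{-1},
\]
which is the claimed bound. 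The main obstacle, if any, is the careful bookkeeping to see that $\log\log u$, $(\log\log u -1)/\log u$ and $E(x,u)$ in the exponent of Theorem~\ref{thm:CEP} all get absorbed into the $o(1)$ of the second constant — but since each of these is $o(\log u)$, this absorption is routine.
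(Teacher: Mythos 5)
Your proposal is correct and follows essentially the same route as the paper, which proves the corollary implicitly by the remark preceding it: substitute $u=\log x/\log y$ into Theorem~\ref{thm:CEP} to get $\Pcal(x,y)\geq u^{-u(1+o(1))}$, then carry out exactly the $L$-notation bookkeeping you spell out. Your added verification that $u$ lies in the admissible range of Theorem~\ref{thm:CEP} and that the $\log\log u$ and $E(x,u)$ terms are absorbed into the $o(1)$ only makes explicit what the paper leaves tacit.
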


\subsection*{Smoothness tests.}
Now we have estimated the ratio of smooth numbers below~$N$ to $N$, it remains to give a way to recognize them. We need to introduce \emph{smoothness tests}. The first idea one may have is considering the complete factorization. Once we know the prime decomposition of an integer, it is easy to recognize if the number is smooth with respect to some smoothness bound. The best algorithm for factoring an integer $N$ is currently the \emph{Number Field Sieve} (NFS) and has runtime in $L_N\left(\frac{1}{3}, \sqrt[3]{\frac{64}{9}}\right)$ --- see~\cite{LLMP90} for more details.

However it seems reasonable that, given a smoothness bound $B$, to test if an integer is $B$-smooth or not has a complexity that essentially depends on $B$, and not so much on the input integer. Such an algorithm exists and is derived from the \emph{Elliptic Curve Method}, introduced by Lenstra in~\cite{Len87} for factoring integers. It provides a Monte-Carlo algorithm whose heuristic complexity is given in the following proposition.

\begin{prop} \label{prop:CostECM}
For a given smoothness bound $B$ and an integer $N$, ECM finds the $B$-smooth part of $N$ in time \[\left(\log N\right)^2 \cdot L_B\left(\frac{1}{2},\sqrt{2}\right),\]
where the factor $\left(\log N\right)^2$ comes from the multiplication of two $N$-bits integers.
\end{prop}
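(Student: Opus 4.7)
The plan is to analyze Lenstra's Elliptic Curve Method as applied iteratively to extract all prime factors of $N$ that are at most $B$, and then to optimize the auxiliary smoothness bound used internally by ECM. I would begin by recalling the mechanics of a single ECM trial: pick a random elliptic curve $E$ defined over $\ZZ/N\ZZ$ and a random point $P\in E(\ZZ/N\ZZ)$, choose an internal smoothness bound $B_1$, and attempt to compute the scalar multiplication $kP$ with $k=\operatorname{lcm}(1,2,\dots,B_1)$ inside $\ZZ/N\ZZ$. If $p\mid N$ is a prime factor of $N$ with $p\leq B$ such that the order $|E(\FF_p)|$ is $B_1$-smooth, then $|E(\FF_p)|$ divides $k$, so $kP$ reduces to the point at infinity modulo $p$ but generically not modulo other prime factors, and an inversion failure in $\ZZ/N\ZZ$ during the addition chain reveals $p$ via a $\gcd$ with $N$.

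Next I would bound the cost of a single trial. By the Prime Number Theorem, $\log k=B_1(1+o(1))$, so the scalar multiplication $kP$ consists of $O(B_1)$ elliptic-curve group operations, each requiring $O(1)$ arithmetic operations in $\ZZ/N\ZZ$; with schoolbook multiplication each such operation costs $O((\log N)^2)$ bit operations, giving a per-curve cost of $B_1\cdot(\log N)^2$ up to lower-order factors. For the probability that a single trial succeeds in exposing a given prime $p\leq B$, the standard ECM heuristic is that $|E(\FF_p)|$ behaves like a uniformly random integer in the Hasse interval $[p+1-2\sqrt{p},\,p+1+2\sqrt{p}]$, so Heuristic~\ref{heur:smooth} (or equivalently the corollary of Theorem~\ref{thm:CEP}) provides the smoothness probability. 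Setting $B_1=\LL_B\!\left(\tfrac12,c_1\right)$ and $p\leq B$, the probability that $|E(\FF_p)|$ is $B_1$-smooth is at least $L_B\!\left(\tfrac12,\tfrac{1}{2c_1}\right)^{-1}$.

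The expected number of curves until a prime factor at most $B$ is discovered is therefore $L_B\!\left(\tfrac12,\tfrac{1}{2c_1}\right)$, and multiplying by the per-curve cost yields a total cost of
\[
(\log N)^2 \cdot \LL_B\!\left(\tfrac12,c_1\right)\cdot L_B\!\left(\tfrac12,\tfrac{1}{2c_1}\right)
= (\log N)^2 \cdot L_B\!\left(\tfrac12,\; c_1+\tfrac{1}{2c_1}\right)
\]
for extracting one smooth prime factor. Minimizing $c_1+\tfrac{1}{2c_1}$ over $c_1>0$ (derivative $1-\tfrac{1}{2c_1^2}=0$) gives $c_1=1/\sqrt{2}$ and minimum value $\sqrt{2}$, so the optimal per-factor cost is $(\log N)^2\cdot L_B\!\left(\tfrac12,\sqrt{2}\right)$. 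Finally, to recover the entire $B$-smooth part of $N$ one divides out each factor as it is found and iterates; since the total number of prime factors of $N$ is at most $\log_2 N$, this introduces only a polynomial-in-$\log N$ overhead, which is absorbed into the $o(1)$ of the $L_B\!\left(\tfrac12,\sqrt{2}\right)$ term.

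The main obstacle I foresee is not the optimization, which is routine, but the invocation of the ECM success heuristic: rigorously one only knows distributional statements about $|E(\FF_p)|$ on average over curves, so the step claiming that Heuristic~\ref{heur:smooth} applies to $|E(\FF_p)|$ is the standard, unproven ingredient and should be flagged as such. Everything else (the Hasse bound, the cost of scalar multiplication, the arithmetic cost in $\ZZ/N\ZZ$, and the optimization over $c_1$) is elementary once that heuristic is granted.
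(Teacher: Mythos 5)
Your plan is correct: it is the standard heuristic analysis of ECM (per-curve cost $\LL_B(\frac12,c_1)\cdot(\log N)^2$, success probability $L_B(\frac12,\frac{1}{2c_1})^{-1}$ from the smoothness heuristic applied to $|E(\FF_p)|$ in the Hasse interval, optimization at $c_1=1/\sqrt2$ giving $\sqrt2$, and a polynomial number of repetitions to strip off the whole $B$-smooth part), and you rightly flag the randomness of $|E(\FF_p)|$ as the unproven ingredient. The paper itself gives no proof of this proposition --- it states it as the heuristic complexity of Lenstra's method with a citation to \cite{Len87} --- so your reconstruction is precisely the argument that citation stands for, and matches it.
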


\subsection{Smooth ideals} \label{sec:SmoothId}

For our purposes, we need to extend these results on smoothness to ideals. 

\begin{defin}
For an integer $B \in \NN$, we say that an ideal $\afrak$ is $B$-smooth if all its prime factors have a norm below $B$.
\end{defin}

Scourfield substantially shows in~\cite{Sco04} that the results of Dickman can be generalized to number fields. However, as in the case of integers, this does not suffice and we need a stronger assumption, which we formulate as Heuristic~\ref{heur:smooth}:

\begin{heur*}
The probability $\Pcal(x,y)$ that an ideal of norm bounded by $x$ is $y$-smooth satisfies \[\Pcal(x,y) \geq e^{-u (\log u)(1+o(1))} \quad \text{for} \quad u=\frac{\log x}{\log y}.\]
\end{heur*}

We stress that this is the exact correspondence of what have been proven for integers. This heuristic already appears in the work of Biasse and Fieker~\cite[Heuristic~1]{BF14} about class group computation. The previous heuristic admits a neat rewriting in terms of the handy $L$-notation:

\begin{cor} \label{cor:smooth}
Assuming that $x= \LL_\DD(\alpha_1,c_1)$, $y= \LL_\DD(\alpha_2,c_2)$, and $\alpha_1 > \alpha_2$, Heuristic~\ref{heur:smooth} can be expressed as
\[\Pcal(x,y) \geq L_\DD\left(\alpha_1-\alpha_2,(\alpha_1-\alpha_2)\frac{c_1}{c_2}\right)^{-1}.\]
\end{cor}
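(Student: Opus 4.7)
The plan is a direct substitution and asymptotic simplification: I unfold the $\LL$-notation for $x$ and $y$, compute the parameter $u = \log x/\log y$ appearing in Heuristic~\ref{heur:smooth}, expand $u\log u$, and read off the $L$-notation bound. The only thing to be careful about is that $\log u$ collapses to a clean leading term only because $\alpha_1 > \alpha_2$, which makes $u$ tend to infinity with $\DD$ and justifies absorbing subleading factors into the $1+o(1)$.

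First I would write
\[
\log x = c_1 (\log \DD)^{\alpha_1}(\log\log \DD)^{1-\alpha_1}, \qquad \log y = c_2 (\log \DD)^{\alpha_2}(\log\log \DD)^{1-\alpha_2},
\]
so that
\[
u \;=\; \frac{\log x}{\log y} \;=\; \frac{c_1}{c_2}\left(\frac{\log \DD}{\log\log \DD}\right)^{\alpha_1-\alpha_2}.
\]
Taking logarithms and using $\alpha_1-\alpha_2>0$, I get $\log u = (\alpha_1-\alpha_2)\log\log \DD \cdot(1+o(1))$, since the additive constant $\log(c_1/c_2)$ and the term $-(\alpha_1-\alpha_2)\log\log\log\DD$ are both negligible compared with $\log\log \DD$.

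Next I would multiply to obtain
\[
u\log u \;=\; (\alpha_1-\alpha_2)\,\frac{c_1}{c_2}\,(\log \DD)^{\alpha_1-\alpha_2}(\log\log \DD)^{1-(\alpha_1-\alpha_2)}\,(1+o(1)).
\]
Plugging this into the heuristic bound $\Pcal(x,y)\geq e^{-u(\log u)(1+o(1))}$ and absorbing the product of the two $(1+o(1))$ factors into a single one, I recognize the exponent as the defining expression of $\LL_\DD\!\left(\alpha_1-\alpha_2,(\alpha_1-\alpha_2)\tfrac{c_1}{c_2}\right)$ with a $o(1)$ correction in the constant, which is exactly $L_\DD\!\left(\alpha_1-\alpha_2,(\alpha_1-\alpha_2)\tfrac{c_1}{c_2}\right)$ by the definition of the $L$-notation given in the introduction. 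Inverting the sign gives the claimed lower bound on $\Pcal(x,y)$. The only minor obstacle is the bookkeeping on $\log u$: I would explicitly note that the hypothesis $\alpha_1>\alpha_2$ guarantees $u\to\infty$, so that all subleading terms in $\log u$ may be safely absorbed; otherwise the computation is routine.
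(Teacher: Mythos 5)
Your proposal is correct and follows exactly the computation the paper leaves implicit: the corollary is stated as a direct rewriting of Heuristic~\ref{heur:smooth}, obtained by substituting $u=\log x/\log y=\frac{c_1}{c_2}\bigl(\frac{\log\DD}{\log\log\DD}\bigr)^{\alpha_1-\alpha_2}$, using $\alpha_1>\alpha_2$ to get $\log u=(\alpha_1-\alpha_2)\log\log\DD\,(1+o(1))$, and absorbing the subleading factors into the $o(1)$ of the $L$-notation. Nothing is missing; your bookkeeping on $\log u$ and the role of $\alpha_1>\alpha_2$ is exactly the right justification.
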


Note that Seysen~\cite{Sey87} proved in 1985 a similar result for quadratic number fields. For arbitrary degree, it remains conjectural, even under ERH.

\subsection*{Smoothness tests.}
Testing smoothness for ideals is not very complicated, assuming that we know how to test smoothness for integers. Indeed, given $B \in \NN$, if $\afrak$ is $B$-smooth, then in particular its norm $\Ncal(\afrak)$ is $B$-smooth. Therefore, testing smoothness for ideals essentially amounts to testing smoothness for ideal norms. Computing the norm of an ideal is easy and has a polynomial runtime in both the extension degree and the size of the norm. Once we know the prime numbers appearing in the norm, it suffices to find the valuations at the prime ideals above them. A way to figure out these valuations is explained in~\cite[Section~4.8.3]{Coh93}. The algorithm described also has a complexity that is polynomial in the extension degree and the size of the prime number $p$.

Finally, the runtime of ideal smoothness tests is the same as integer smoothness tests: \[\Poly\left(n,\log \Ncal(\afrak)\right) \cdot L_B\left(\frac{1}{2},\sqrt{2}\right),\]
where $n$ is the extension degree of the field and $\Ncal(\afrak)$ the norm of the ideal we want to test.
\end{document}